\newtheorem{theorem}{Theorem}[section]
\newtheorem{lemma}[theorem]{Lemma}
\newtheorem{definition}[theorem]{Definition}
\newtheorem{corollary}[theorem]{Corollary}
\newtheorem{claim}[theorem]{Claim}
\theoremstyle{remark}
\newtheorem{remark}[theorem]{Remark}
\theoremstyle{remark}
\numberwithin{equation}{section}
\begin{document}

\title{\textbf{Angle structures on pseudo 3-manifolds}}

\author{\medskip Huabin Ge, Longsong Jia, Faze Zhang}

\date{}

\maketitle
\begin{abstract}
It is still not known whether a hyperbolic $3$-manifold admits an angle structure or not. We consider angle structures with area-curvature $(A,k)$ on triangulated pseudo 3-manifolds $M$ in this article. A sufficient and necessary condition for the existence of such angle structures is established. As a consequence, any compact hyperbolic $3$-manifold with totally geodesic boundary admits an angle structure. We also derive certain topological information of $M$ from the existence of such angle structures.
\end{abstract}
\maketitle




\section{Introduction}
In the 1990s, Casson and Rivin (see \cite{Lackenby-1,Rivin} or \cite{Futer2011}  for a self-contained exposition) discovered a powerful technique for solving Thurston's hyperbolic gluing equations. By introducing the concept of angle structures on an ideally triangulated $3$-manifold $M$, they proved that if a maximal volume angle structure exists, it provides solutions to Thurston's gluing equations, thereby yielding a hyperbolic structure on $M$. Using theories and tools such as normal surfaces, the connection between angle structures and three-dimensional topology is increasingly being discovered. We refer to \cite{HRS,KangRubin,KR,Lac,Rivin} for examples. For a triangulated pseudo $3$-manifold $(M,\mathcal T)$, Luo-Tillmann \cite{LT} further developed the concept of (generalized) angle structures with area-curvature $(A, \kappa)$.
Assuming $A\leq 0$, a necessary condition for the existence of such angle structures was established in \cite{LT}.

In this article, we show that the necessary condition in \cite{LT} is also sufficient for the existence of such angle structures, by assuming that $(M,\mathcal T)$ admits a semi-angle structure. Let $\{\sigma_1,\cdots,\sigma_n\}$ be the tetrahedra in a triangulated compact pseudo $3$-manifold $(M,\mathcal{T})$. A \emph{normal surface} $F$ in $(M,\mathcal T)$ is an embedded surface, so as its intersection with any tetrahedron $\sigma_i$ is a disjoint union of normal disks. There are 7 types of normal disks in each tetrahedron $\sigma_i$, including 3 quadrilaterals and 4 triangles. The \emph{normal coordinate} of $F$ is an ordered normal disk types $(q_1,\cdots, q_{3n}, t_1,\cdots, t_{4n})\in\mathbb{R}^{7n}$. These coordinates satisfy a system of compatibility equations with solution space denoted by $\mathcal C(M, \mathcal T)$.

By taking the contributions of each normal disk $t_i$ or $q_j$ to the Euler characteristic of $F$, a function $\chi^\ast(t_j)$ or $\chi^\ast(q_j)$ can be introduced. Linearly extending those $\chi^\ast(t_i)$ and $\chi^\ast(q_j)$, the \emph{generalized Euler characteristic function} $\chi^\ast(\cdot)$ is well-defined on $\mathbb{R}^{7n}$ (see (\ref{e4})-(\ref{e6})). It agrees with the Euler characteristic on embedded and immersed normal surfaces, and gives an upper bound for the Euler characteristic of a branched normal surface. Each generalized angle structure $\alpha$ determines an area-curvature function $(A,k)$ as follows. The \emph{combinatorial area} $A(d)$ of a normal disk $d\subset \sigma_i$ is $\sum_j\alpha(e_j)-(k-2)\pi$, where $d$ is a $k$-side polygon, and intersects the edge $e_1,\cdots, e_k$ of $\sigma_i$. In addition, for an interior (boundary, resp.) edge $e$, its \emph{combinatorial curvature} $k(e)$ is $2\pi$ ($\pi$, resp.) minus the sum of the angles surrounding it. Similar to $\chi^\ast$, let $\chi^{(A, \kappa)}(\cdot)$ (see (\ref{equation3}) for details) be the \emph{Euler characteristic function with area-curvature $(A, \kappa)$}, our first result is:

\begin{theorem}
\label{theorem1}
Let $(M, \mathcal T)$ be a triangulated compact pseudo $3$-manifold. Suppose it admits a semi-angle structure with area-curvature $(A, \kappa)$ satisfying $A \leq 0$. If $\chi^\ast(s)<\chi^{(A, \kappa)}(s)$ for those $s\in C(M,\mathcal T)$ with all quadrilateral coordinates non-negative and at least one quadrilateral coordinates positive, then $(M, \mathcal T)$ admits an angle structure with area-curvature $(A, \kappa)$.
\end{theorem}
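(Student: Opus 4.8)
The plan is to treat the existence of an angle structure with area-curvature $(A,\kappa)$ as a strict feasibility problem and to dualize it against normal surface theory via a theorem of the alternative. Fix the prescribed data $(A,\kappa)$ and regard a candidate angle assignment as a vector $\alpha$ recording the dihedral angle on each edge of each tetrahedron (opposite edges carrying equal angles). The requirements that $\alpha$ induce the combinatorial areas $A(d)=\sum_j\alpha(e_j)-(k-2)\pi$ and the curvatures $\kappa(e)$ are affine in $\alpha$, so the assignments realizing $(A,\kappa)$ form an affine subspace $W$; angle structures are the points of $W\cap\{\alpha>0\}$ and semi-angle structures the points of $W\cap\{\alpha\geq 0\}$, the latter nonempty by hypothesis and finite-dimensional since $M$ is compact. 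Because a semi-angle structure exists, convexity yields a clean reduction: $W\cap\{\alpha>0\}\neq\emptyset$ if and only if for each coordinate there is a semi-angle structure positive in that coordinate, whereupon one averages the finitely many witnesses. Thus an angle structure fails to exist precisely when some coordinate is forced to vanish on the whole semi-angle polytope, i.e.\ $\min\{\alpha_{i_0}:\alpha\in W,\ \alpha\geq 0\}=0$ for some index $i_0$.

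The next step is to apply linear programming duality (Farkas/Gordan) to this minimization. Optimality at value $0$ produces multipliers, one for each area equation and one for each curvature equation, which assemble into a vector in the normal-coordinate space $\mathbb{R}^{7n}$. The crucial structural input is that the incidence pattern of the area and curvature functionals, recording which corner or edge each normal disk meets, is exactly the transpose of the normal surface matching system; hence the dual certificate is a nonzero class $s\in\mathcal{C}(M,\mathcal{T})$. Dual feasibility, namely that the coefficient of each $\alpha_j$ in the reduced functional is nonnegative, forces the quadrilateral coordinates of $s$ to be nonnegative, while the presence of the objective coordinate $i_0$ guarantees at least one quadrilateral coordinate is positive. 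This is precisely the class appearing in the statement.

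Finally I would convert the optimality condition into the Euler-characteristic inequality. Combinatorial Gauss--Bonnet expresses $\chi^{(A,\kappa)}(s)$ as $\tfrac{1}{2\pi}$ times the total combinatorial area plus total curvature of $s$ relative to the chosen semi-angle structure, whereas $\chi^\ast(s)$ is the purely combinatorial count; their difference is a linear functional in the quadrilateral coordinates of $s$ weighted by the areas $A(q)$. Invoking the hypothesis $A\leq 0$ together with the nonnegativity of the quadrilateral coordinates extracted above, this difference carries the sign that gives $\chi^\ast(s)\geq\chi^{(A,\kappa)}(s)$. This contradicts the assumed strict inequality $\chi^\ast(s)<\chi^{(A,\kappa)}(s)$ for exactly such $s$, so the obstruction cannot occur and an angle structure with area-curvature $(A,\kappa)$ exists.

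I expect the main obstacle to lie in the dictionary of the second step: verifying rigorously that the linear programming dual certificate is a genuine normal class in $\mathcal{C}(M,\mathcal{T})$ and pinning down which coordinates inherit nonnegativity, since this is where the transpose relationship between the angle equations and the matching equations must be made precise and where one must track the role of the boundary edges. By contrast, the hypothesis $A\leq 0$ is confined to the final sign computation, and the compactness of $M$ is what keeps the duality finite-dimensional and legitimizes the averaging in the reduction.
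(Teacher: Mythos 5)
Your outline founders at step 3, and the error is a sign that cannot be repaired. The identity you invoke is the paper's Lemma \ref{lemma2}: $\chi^{*}(s)-\chi^{(A,\kappa)}(s)=\frac{1}{2\pi}\sum_{q}A(q)\,x_{q}(s)$, where $A(q)$ is the quadrilateral area induced by the semi-angle structure $\alpha$. But under your standing hypotheses every $A(q)$ is \emph{non-positive}: if $e$ is either edge of $\sigma_i$ missed by $q$, and $t^{l_1},t^{l_2}$ are the vertex triangles at the two ends of $e$, then $A(q)=A(t^{l_1})+A(t^{l_2})-2\alpha(e)\leq 0$ since $A\leq 0$ and $\alpha(e)\geq 0$. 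Hence nonnegative quadrilateral coordinates yield $\chi^{*}(s)\leq\chi^{(A,\kappa)}(s)$ --- the \emph{opposite} of the inequality $\chi^{*}(s)\geq\chi^{(A,\kappa)}(s)$ you need, and perfectly consistent with the theorem's hypothesis, so no contradiction results. The entire content of the hypothesis is strictness, so a contradiction requires the dual certificate to force \emph{equality}, i.e.\ $A(q)x_{q}(s)=0$ for every $q$. Optimality (the vanishing of $(h,z)\cdot(a,b)=(B^{T}(h,z))\cdot\alpha$) only gives that the angles dual to quads with $x_{q}(s)>0$ vanish, which leaves $A(q)=A(t^{l_1})+A(t^{l_2})$, still strictly negative whenever an adjacent triangle has strictly negative area. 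Equivalently, in the Luo--Tillmann identity the paper uses in Section \ref{3}, $\frac{1}{\pi}(h,z)\cdot(a,b)=\chi^{*}(s)-\chi^{(A,\kappa)}(s)+\frac{1}{2\pi}\sum(z_j+h_i^{l_1}+h_i^{l_2})(a_i^{l_1}+a_i^{l_2}-2\pi)$, the correction sum is $\geq 0$ precisely because $A\leq 0$, so it can absorb the strictly negative term $\chi^{*}(s)-\chi^{(A,\kappa)}(s)$; it vanishes identically only when $A=0$. This is exactly why the case $A=0$ (Luo--Tillmann \cite{LT}) is immediate while $A<0$ is the whole difficulty; your proposal has no mechanism for the latter, whereas the paper splits into the case $A=0$ and treats the remaining case by an additional deformation argument, linearly interpolating the area vector toward zero --- an ingredient with no counterpart in your outline.

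There is also a defect in step 2. Dualizing $\min\{\alpha_{i_0}:\;B\alpha=(a,b),\ \alpha\geq 0\}$ produces a multiplier vector $y=(h,z)$ satisfying $B^{T}y\leq e_{i_0}$, \emph{not} $B^{T}y\leq 0$: the entry of $B^{T}y$ in the objective coordinate $i_0$ may be as large as $+1$. Since the quadrilateral coordinates of the associated normal class are, up to positive scale, the negatives of the entries of $B^{T}y$, your certificate may have a negative quadrilateral coordinate (and none need be positive), so it need not lie in the cone where the hypothesis applies. This part is repairable: drop the averaging reduction and apply part 3 of Farkas's Lemma \ref{lemma} directly, as the paper does --- non-existence of a strict solution gives $y$ with $B^{T}y\leq 0$, $B^{T}y\neq 0$ and $y\cdot(a,b)\geq 0$, and part 2 together with the semi-angle structure forces $y\cdot(a,b)=0$, which is the correct certificate. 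But even after this repair, the sign error in step 3 remains and blocks the conclusion.
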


Combining Theorem \ref{theorem1} with Proposition $17$ of Luo-Tillmann \cite{LT}, we actually provide the following necessary and sufficient conditions for the existence of angle structures.

\begin{corollary}
\label{corollary2}
Let $(M, \mathcal T)$ be a triangulated compact pseudo $3$-manifold. Suppose it admits a semi-angle structure with area-curvature $(A, \kappa)$ satisfying $A \leq 0$. Then the following statements are equivalent:
\begin{enumerate}
\item  $(M, \mathcal T)$ admits an angle structure with area-curvature $(A, \kappa)$;
\item $\chi^\ast(s)<\chi^{(A, \kappa)}(s)$ for those $s\in C(M,\mathcal T)$ with all quadrilateral coordinates non-negative and at least one quadrilateral coordinates positive.
\end{enumerate}
\end{corollary}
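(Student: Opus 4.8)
The plan is to read the corollary as the conjunction of the two implications $(2)\Rightarrow(1)$ and $(1)\Rightarrow(2)$, both taken under the standing hypothesis that $(M,\mathcal T)$ carries a semi-angle structure with area-curvature $(A,\kappa)$ and $A\le 0$. Under this hypothesis the implication $(2)\Rightarrow(1)$ is nothing but Theorem \ref{theorem1}: assuming the strict inequality $\chi^\ast(s)<\chi^{(A,\kappa)}(s)$ on the set of those $s\in C(M,\mathcal T)$ with all quadrilateral coordinates non-negative and at least one positive, Theorem \ref{theorem1} directly produces an angle structure with area-curvature $(A,\kappa)$. So for this direction there is nothing further to do; all of the substantive analytic work has already been carried out in establishing Theorem \ref{theorem1}.

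For the reverse implication $(1)\Rightarrow(2)$ I would invoke the necessity result, Proposition $17$ of Luo--Tillmann \cite{LT}. The mechanism I expect to use is the combinatorial Gauss--Bonnet identity relating the two Euler characteristic functions: for every $s\in C(M,\mathcal T)$ the difference $\chi^{(A,\kappa)}(s)-\chi^\ast(s)$ is, up to a fixed positive factor, the negative of the total combinatorial area $\sum_d A(d)\,s_d$ accumulated by $s$, the edge-curvature contributions $k(e)$ cancelling against the compatibility relations that cut out $C(M,\mathcal T)$. Given a genuine angle structure $\alpha$, every triangular disk has combinatorial area $\sum_j\alpha(e_j)-\pi=0$, since the three dihedral angles meeting at a vertex sum to $\pi$, while every quadrilateral disk has combinatorial area equal to $-2$ times the dihedral angle on the pair of edges it misses, hence strictly negative because \emph{all} dihedral angles of a genuine angle structure are strictly positive. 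Consequently, whenever $s$ has non-negative quadrilateral coordinates with at least one of them positive, the weighted area sum is strictly negative, and the desired strict inequality $\chi^\ast(s)<\chi^{(A,\kappa)}(s)$ follows. This is exactly the conclusion of Proposition $17$ of \cite{LT}.

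The main point requiring care here is bookkeeping rather than a genuine obstacle. I must verify that the exact domain of configurations in Proposition $17$ of \cite{LT} and the orientation and strictness of its inequality match statement $(2)$ verbatim, and note that the standing semi-angle hypothesis is not actually binding for the necessity direction: an angle structure is in particular a semi-angle structure, so $(1)$ automatically supplies the hypothesis, which is genuinely needed only to run Theorem \ref{theorem1}. Once the domains and the sign conventions are aligned, the two implications close the loop and the equivalence of $(1)$ and $(2)$ is immediate.
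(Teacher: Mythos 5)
Your overall skeleton is the same as the paper's: the implication $(2)\Rightarrow(1)$ is exactly Theorem \ref{theorem1}, and your observation that the semi-angle hypothesis is automatically supplied by an angle structure in the necessity direction is correct. The problem is the mechanism you describe for $(1)\Rightarrow(2)$. You claim that under a genuine angle structure every normal triangle has combinatorial area $\sum_j\alpha(e_j)-\pi=0$ ``since the three dihedral angles meeting at a vertex sum to $\pi$,'' and that every quadrilateral has area equal to $-2$ times the angle on the missed pair of edges. Both claims are false in the setting of this corollary: an angle structure with area-curvature $(A,\kappa)$ has vertex sums equal to $\pi+A(t)$, which for $A\le 0$ may be strictly less than $\pi$; your statements hold only in the special case $A\equiv 0$ (where, moreover, the quadrilateral formula also uses that opposite dihedral angles are equal, again a consequence of vertex sums being $\pi$). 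A related imprecision: the identity you invoke must be quoted as in Lemma \ref{lemma2}, namely
\begin{equation*}
\chi^{(A,\kappa)}(s)-\chi^{\ast}(s)=-\frac{1}{2\pi}\sum_{q\in\square}A(q)\,x_q(s),
\end{equation*}
with the sum over quadrilaterals \emph{only}. This is not cosmetic: statement $(2)$ puts no sign constraint on the triangle coordinates of $s$, so a version of the identity involving $\sum_d A(d)s_d$ over all disks (which is what you wrote) would not yield the inequality even knowing all $A(d)\le 0$.

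The gap is repairable, and the repaired argument is a legitimately different and more elementary route than the one the paper actually takes. The correct computation is: if $q$ misses the opposite edge pair containing $e$, and $t_1,t_2$ are the vertex triangles at the two endpoints of $e$, then $A(q)=A(t_1)+A(t_2)-2\alpha(e)\le-2\alpha(e)<0$, using $A\le 0$ and $\alpha(e)\in(0,\pi)$; this is precisely the estimate the paper performs in the proof of Theorem \ref{theorem5}. Combined with Lemma \ref{lemma2} and the hypothesis $x_q(s)\ge 0$ with at least one $x_q(s)>0$, this gives $\chi^{\ast}(s)<\chi^{(A,\kappa)}(s)$ at once. By contrast, the paper proves the necessity direction by duality: the nonemptiness of $\{x\mid Bx=(a,b),\,x>0\}$ forces, via the third part of Farkas's Lemma \ref{lemma}, the inequality $(h,z)\cdot(a,b)<0$ for all dual vectors with $B^T(h,z)^T\le 0$ and $\ne 0$, and the desired inequality is then extracted from Luo--Tillmann's Formula (4.9) by a sign analysis of the terms $(z_j+h_i^{l_1}+h_i^{l_2})(a_i^{l_1}+a_i^{l_2}-2\pi)$. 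Your Lemma-\ref{lemma2}-based route avoids Farkas duality entirely for this direction, but as written it only proves the case $A\equiv 0$; you need the corrected area inequality above to cover the generality the corollary claims.
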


Given an area-curvature $(A,k)$ with $A\leq 0$, if $A=0$ occurs precisely on the $(0,0,\pi)$-normal triangles and $A<0$ on the other normal triangles, we refer to such a pair $(A,\kappa)$ a \emph{flat area-curvature function}. This leads us to:

\begin{theorem}\label{theorem3}
Let $(M, \mathcal T)$ be a triangulated compact pseudo $3$-manifold, and $(A,\kappa)$ be a flat area-curvature function. If $(M, \mathcal T)$ admits a semi-angle structure with area-curvature $(A, \kappa)$ and at least one dihedral angle around an edge of $\mathcal T$  lies within $(0,\pi)$, then there exists an angle structure with area-curvature $(A', \kappa)$ where $A'<0$.
 \end{theorem}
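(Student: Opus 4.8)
The plan is to fix the combinatorial curvature $\kappa$ and to obtain the desired structure by an infinitesimal deformation of the given semi-angle structure $\alpha_0$. Writing $a,b,c$ for the three opposite-edge-pair angles of a tetrahedron, I regard a candidate as $\alpha_0+tv$, where $v$ assigns a real number $v_a,v_b,v_c$ to the three edge-pairs in each of the $n$ tetrahedra and $t>0$ is small, and I seek $v$ meeting three requirements: (a) $v$ leaves the angle sum around every edge of $\mathcal T$ unchanged, so that $\kappa(\alpha_0+tv)=\kappa$; (b) for every \emph{flat} tetrahedron $\sigma$ (one whose triangles are of type $(0,0,\pi)$, so $a_\sigma+b_\sigma+c_\sigma=\pi$) the area derivative is strictly negative, $v_a+v_b+v_c<0$; and (c) at each degenerate slot $v$ points inward, i.e.\ $v>0$ where $\alpha_0=0$ and $v<0$ where $\alpha_0=\pi$. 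Granting such a $v$, for all sufficiently small $t>0$ every dihedral angle of $\alpha_0+tv$ lies in $(0,\pi)$ — the interior ones by continuity, the degenerate ones by (c) — while every flat tetrahedron acquires $a+b+c<\pi$ by (b) and the others keep $a+b+c<\pi$ by continuity. Taking $A'$ to be the area function of $\alpha_0+tv$ yields an angle structure with area-curvature $(A',\kappa)$, and $A'<0$ since $a+b+c<\pi$ makes every triangle area negative and, all angles being positive, each quadrilateral area $2(x+y-\pi)$ is negative as well. Thus the theorem reduces to producing $v$.

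Two observations orient the search. First, summing the dihedral angles in two ways — over tetrahedra, giving $\sum_\sigma 2(a_\sigma+b_\sigma+c_\sigma)$, and over the edges of $\mathcal T$, contributing $2\pi-\kappa(e)$ at interior edges and $\pi-\kappa(e)$ at boundary edges — shows that the total triangle area $\sum_\sigma(a_\sigma+b_\sigma+c_\sigma-\pi)$ is constant on the slice of fixed curvature, depending only on $\kappa$ and the combinatorics. Second, the hypothesis that some dihedral angle lies in $(0,\pi)$ forces its tetrahedron to have $a+b+c<\pi$, since a $(0,0,\pi)$ tetrahedron has no interior angle and flatness forbids any other way of reaching $a+b+c=\pi$; hence this constant total area is strictly negative. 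So the global constraint (a) is consistent with making every tetrahedron strictly negative — it only asks that the fixed negative total be spread over all tetrahedra — and the whole content lies in reconciling (a) with the strict local conditions (b), (c).

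Existence of $v$ is the feasibility of the linear system (a)--(c), which I would settle by a theorem of the alternative (Motzkin's transposition theorem). Infeasibility is equivalent to a nontrivial dual certificate: edge multipliers $\lambda_e$ together with nonnegative weights on the flat tetrahedra and on the degenerate slots, not all of the latter zero, whose weighted constraint gradients cancel in $\mathbb R^{3n}$; this is exactly a normal-class obstruction of the kind governing Theorem \ref{theorem1}. I would dispose of it as follows. Pairing the vanishing relation with an arbitrary semi-angle structure of curvature $\kappa$ and area $\le 0$, and using that at $\alpha_0$ the flat areas are $0$ and the degenerate angles are $0$ or $\pi$, shows that the certificate would pin some flat tetrahedron's area to $0$ (or some degenerate slot to its value) throughout the entire fixed-curvature family of such semi-angle structures. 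It then suffices to exhibit one curvature-preserving deformation unpinning it: the tetrahedron carrying the interior angle supplies a slot free of any sign constraint into which a curvature imbalance can be absorbed, and connectedness of $(M,\mathcal T)$ lets me route a deformation from any prescribed flat tetrahedron or degenerate slot to that flexible slot, contradicting the pinning. (Alternatively, once $A'$ and a semi-angle structure realizing $(A',\kappa)$ are in hand, one may bypass the deformation and instead verify $\chi^\ast(s)<\chi^{(A',\kappa)}(s)$ on the relevant cone and quote Corollary \ref{corollary2}.)

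The step I expect to be the crux is precisely this unpinning, i.e.\ the proof that the dual certificate cannot exist. Its difficulty is genuinely global, since the edge multipliers $\lambda_e$ tie neighboring tetrahedra together through the face identifications; controlling how the single interior-angle hypothesis forces \emph{every} nonnegative certificate weight to vanish — not merely those near the flexible tetrahedron — is where the real work lies, and I anticipate needing a connectivity or maximum-principle argument on the dual graph to propagate the rigidity seeded at that slot across the whole complex.
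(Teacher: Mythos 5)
Your overall strategy---deform the given semi-angle structure linearly, keep every edge sum (hence $\kappa$) fixed, push each $0$-angle up and each $\pi$-angle down, and force the area of every flat triangle to decrease strictly---is exactly the strategy of the paper. However, there are two problems: one undermines your reduction itself, and one is an execution gap you yourself flag.

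First, the reading of the hypothesis. You take it to mean that a single dihedral angle somewhere in $\mathcal T$ lies in $(0,\pi)$, and you plan to ``route'' a deformation from that one flexible slot through the complex by a connectivity argument. The paper's proof (and the verification supplied in the proof of Corollary \ref{corollary4}) uses the hypothesis edge-by-edge: for \emph{every} edge $e_j$ of $\mathcal T$, the number $k_1$ of angles in $(0,\pi)$ around $e_j$ is nonzero. This stronger form is essential, because preserving $\kappa$ is a separate linear constraint at each edge, and an edge carrying only degenerate angles can be an absolute local obstruction that no global routing removes: if all angles around an interior edge $e$ equal $0$, then $\kappa(e)=2\pi$, and a strict angle structure with the same curvature would need angles in $(0,\pi)$ summing to $2\pi-\kappa(e)=0$, which is impossible; likewise a valence-two edge with both angles equal to $\pi$ has $\kappa(e)=0$, unreachable by two angles each less than $\pi$. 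Nothing in your weak reading excludes such edges, so under that reading the statement is false and the dual certificate you hope to eliminate genuinely exists; the ``unpinning'' step cannot be carried out. (A smaller modeling issue points the same way: you parametrize deformations by opposite-edge-pair angles $a,b,c$, but in this paper the six dihedral angles of a tetrahedron are independent and areas are attached to the four normal triangles separately; a flat structure can put $0$ on an edge and $\pi$ on its opposite edge, making your sign conditions (c) infeasible within your restricted deformation space even when the unrestricted problem is solvable.)

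Second, even granting the correct per-edge hypothesis, your proposal stops exactly at the crux: you invoke Motzkin's transposition theorem and then only sketch a hoped-for argument that dual certificates cannot exist, which you concede is ``where the real work lies.'' The paper shows that no duality is needed once every edge has an interior angle available to absorb the imbalance. At each edge $e_j$ with $m_1$ zero-angles, $n_1$ $\pi$-angles and $k_1>0$ interior angles, set $\alpha_t=t$ on the zeros, $\alpha_t=\pi-3t$ on the $\pi$'s, and subtract $\frac{m_1-3n_1}{k_1}\,t$ from each interior angle around $e_j$. Edge sums are unchanged, so $\kappa$ is preserved; every $(0,0,\pi)$ triangle acquires area $t+t+(\pi-3t)-\pi=-t<0$ (this is precisely what the coefficient $3$ is for); and for $t$ sufficiently small all angles lie in $(0,\pi)$ while every non-flat triangle, whose area was strictly negative, stays strictly negative since the perturbation is $O(t)$. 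Your feasibility problem is thus solved by an explicit one-parameter formula; the theorem-of-the-alternative machinery is, in this situation, both incomplete as you left it and unnecessary.
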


As an application of Theorem \ref{theorem3}, we can establish the following:

\begin{corollary}\label{corollary4}
Suppose $M$ is a compact hyperbolic $3$-manifold with totally geodesic boundary. There is an ideal triangulation $\mathcal T$ so that $(M, \mathcal T)$ admits an angle structure.
 \end{corollary}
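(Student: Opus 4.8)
The plan is to derive Corollary \ref{corollary4} as an application of Theorem \ref{theorem3}, so the main task is to produce, for a compact hyperbolic $3$-manifold $M$ with totally geodesic boundary, an ideal triangulation $\mathcal T$ together with a flat area-curvature function $(A,\kappa)$ and a semi-angle structure whose existence triggers the theorem. The natural source of such a triangulation is the Epstein-Penner / canonical decomposition: since $M$ has totally geodesic boundary, its interior is a finite-volume hyperbolic manifold with geodesic boundary, and the double $DM$ along the geodesic boundary is a cusped hyperbolic manifold. I would invoke the standard fact (Kojima, Frigerio-Petronio) that such $M$ admits an ideal triangulation $\mathcal T$ by truncated ideal tetrahedra (equivalently, an ideal triangulation with one vertex corresponding to the geodesic boundary component), which is exactly the pseudo-manifold setting of the paper.

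The first concrete step is to choose the flat area-curvature function $(A,\kappa)$ appropriate to geodesic boundary. Here the combinatorial curvature $\kappa$ should be set to $0$ at the interior edges (so that the dihedral angles around each interior edge sum to $2\pi$) and, at the boundary edges, prescribed so that the truncated tetrahedron picture is respected; the area function $A$ is taken to be the flat one, vanishing exactly on the degenerate $(0,0,\pi)$-triangles and negative elsewhere. Second, I would exhibit a \emph{semi-angle structure} realizing $(A,\kappa)$: the complete hyperbolic structure on $M$ assigns to each truncated ideal tetrahedron a genuine set of dihedral angles lying in $(0,\pi)$ with the correct edge sums, and these dihedral angles give an actual angle assignment; relaxing the strict positivity to the closed conditions yields a semi-angle structure. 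Crucially, because the hyperbolic structure is nondegenerate, at least one dihedral angle around an edge genuinely lies in the open interval $(0,\pi)$, which is precisely the hypothesis needed to apply Theorem \ref{theorem3}.

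With these pieces in place, the third step is a direct application: Theorem \ref{theorem3} upgrades the semi-angle structure to an honest angle structure with area-curvature $(A',\kappa)$ where $A'<0$, and such an angle structure is in particular an angle structure on $(M,\mathcal T)$, which is the assertion of Corollary \ref{corollary4}.

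The hard part will be the first step, namely justifying that $M$ admits an ideal triangulation fitting the paper's pseudo-manifold framework and that the complete hyperbolic structure induces a \emph{semi}-angle structure with the chosen flat $(A,\kappa)$. One must verify that the edge-sum (curvature) conditions and the combinatorial-area conditions imposed by the hyperbolic dihedral angles match the flat area-curvature data, and that the truncation of ideal vertices into geodesic boundary faces is correctly encoded by the boundary-edge curvature convention $k(e)=\pi-\sum\alpha$. I expect the existence of the triangulation to be citable rather than proved from scratch, so the real content is checking that geodesic-boundary hyperbolicity translates into the hypotheses of Theorem \ref{theorem3}; once that translation is made explicit, the conclusion follows immediately.
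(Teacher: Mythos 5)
Your overall strategy --- extract a semi-angle structure from the hyperbolic geometry and then upgrade it via Theorem \ref{theorem3} --- is the same as the paper's, but your first step contains a genuine gap, and it is exactly where the real work lies. You assert, as a citable fact, that $M$ admits an ideal (truncated) triangulation by geodesic truncated hyperideal tetrahedra whose dihedral angles all lie in $(0,\pi)$ and satisfy the edge-sum conditions. No such result is available: what Kojima's theorem provides is a decomposition of $M$ into truncated hyperideal \emph{polyhedra}, not tetrahedra. To obtain a triangulation one must subdivide each polyhedron (by coning from a vertex), and the induced triangulations of a face shared by two adjacent polyhedra need not match; the paper repairs this by inserting \emph{flat} truncated hyperideal tetrahedra (``pillows'') between the mismatched faces. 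These flat tetrahedra carry degenerate dihedral angles $0$ and $\pi$, which is precisely why the resulting structure is only a \emph{semi}-angle structure with a \emph{flat} area-curvature function (vanishing exactly on the $(0,0,\pi)$-triangles), and why Theorem \ref{theorem3} is needed at all. Your proposal is in fact self-undermining on this point: if a triangulation with all hyperbolic dihedral angles in the open interval $(0,\pi)$ and correct edge sums existed, it would already \emph{be} an angle structure on $(M,\mathcal T)$, and invoking Theorem \ref{theorem3} would be superfluous; the whole difficulty the paper addresses is that such a fully geometric triangulation is not known to exist (this is the geodesic-boundary analogue of the open problem for geometric ideal triangulations of cusped manifolds).

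A second, smaller issue follows from the first: once flat tetrahedra are present, the hypothesis of Theorem \ref{theorem3} --- that around each edge at least one dihedral angle lies in $(0,\pi)$ --- becomes a nontrivial verification rather than an automatic consequence of nondegeneracy. The paper checks it by observing that the insertion of flat tetrahedra creates no new edges, so every edge of $\mathcal T$ is adjacent to at least one genuine (non-flat) hyperideal tetrahedron, whose angle at that edge lies in $(0,\pi)$. Your argument, which assumes all angles are already in $(0,\pi)$, never confronts this step. To repair your proof you should replace your ``citable triangulation'' with the Kojima decomposition, carry out the subdivision-plus-pillows construction, define the flat semi-angle structure using the honest hyperbolic angles on the non-flat tetrahedra and $(0,0,\pi)$ on the flat ones (with $\kappa=0$ since the structure is hyperbolic and the boundary totally geodesic), and then verify the per-edge nondegeneracy condition before applying Theorem \ref{theorem3}.
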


Conversely, the existence of an angle structure with area-curvature $(A, \kappa)$ on $(M, \mathcal T)$ reflects certain topological properties of $M$. Inspired by the work of Jaco-Rubinstein \cite{JR}, Lackenby \cite{Lackenby-1,Lac2} and Garoufalidis-Hodgson-Rubinstein-Segerman \cite{Garouf}, we have:

\begin{theorem}\label{theorem5}
For $A\leq0$ and $\kappa\leq0$, let $M$ be a compact orientable pseudo  $3$-manifold with non-$S^2$ boundary, and let $\mathcal T$ be an ideal triangulation of the interior of $M$. Then the existence of an angle structure with area-curvature $(A, \kappa)$ on $(M, \mathcal T)$ implies that $M$ contains no essential spheres, essential tori, essential disks, or essential annuli. Moreover, the ideal triangulation $\mathcal T$ is $0$-efficient.
\end{theorem}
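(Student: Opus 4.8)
The plan is to reduce everything to a combinatorial Gauss--Bonnet identity and then to normalize each type of essential surface separately. First I would establish that for every closed embedded normal surface $F$ with normal coordinate $s\in\mathcal C(M,\mathcal T)$ one has
\[
2\pi\,\chi(F)=\sum_{d\subset F}A(d)+\sum_{e}\kappa(e)\,(F\cdot e),
\]
where the first sum runs over the normal disks of $F$ and $F\cdot e$ is the weight of $F$ on the edge $e$; this is precisely the statement that $\chi(F)=\chi^{(A,\kappa)}(s)$ on embedded normal surfaces. It follows by summing the defining relation $A(d)=\sum_j\alpha(e_j)-(k-2)\pi$ over all disks and regrouping the angle contributions edge by edge, using that the total dihedral angle around an interior edge $e$ equals $2\pi-\kappa(e)$. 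Since $A\le 0$ and $\kappa\le 0$ while every weight $F\cdot e$ is nonnegative, both terms on the right are nonpositive, so $\chi(F)\le 0$ for every closed embedded normal surface. For properly embedded normal surfaces one argues analogously, the boundary contributions being again controlled by the signs $A\le 0$ and $\kappa\le 0$, so that $\chi(F)\le 0$ holds in that case as well.

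With this inequality the positive–Euler–characteristic cases fall out directly. If $M$ contained an essential sphere it would be reducible, and by the standard normalization theory a least–weight essential sphere may be taken normal; but $\chi=2>0$ contradicts the bound, so $M$ is irreducible. Likewise an essential disk, normalized in the now irreducible $M$, would have $\chi=1>0$, which is impossible, so $\partial M$ is incompressible. As the bound excludes every normal $2$–sphere and every normal disk, and the vertex links here are the non-$S^2$ boundary components (so no vertex–linking spheres occur), the triangulation $\mathcal T$ is $0$–efficient.

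The substantive case is $\chi(F)=0$, namely essential tori and essential annuli, where the crude bound is sharp and must be refined through an equality analysis; I expect this to be the main obstacle, since it is not settled by the inequality alone. Having established irreducibility and $\partial$–irreducibility, I would normalize a putative essential torus or annulus $F$; then $\chi(F)=0$ forces equality throughout the identity above, so every normal disk of $F$ has combinatorial area exactly $0$ and $F$ is disjoint from every edge with $\kappa(e)<0$. The plan is then to show that a surface built entirely from flat (zero–area) disks and avoiding the negatively curved edges is normally parallel to a vertex–linking surface or admits a compression, hence is boundary–parallel or compressible and therefore inessential---a contradiction. Carrying this out requires a delicate combinatorial study of how zero–area normal disks can be assembled around the edges of $\mathcal T$, in the spirit of the taut–structure arguments of Lackenby and Kang--Rubinstein, together with some care over the spun/ideal boundary behaviour in the annulus case; this is where the genuine work lies.
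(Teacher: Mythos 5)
Your overall strategy (a combinatorial Gauss--Bonnet identity plus a sign analysis) is the same one the paper uses, and your identity $2\pi\chi(F)=\sum_{d\subset F}A(d)+\sum_e \kappa(e)(F\cdot e)$ is correct for closed normal surfaces. But there is a genuine gap at your very first inequality, and the same missing ingredient is what makes your third paragraph's ``genuine work'' unnecessary. The hypothesis $A\le 0$ constrains only the normal \emph{triangles}; it says nothing about the combinatorial areas of the quadrilaterals appearing in $\sum_{d\subset F}A(d)$, so ``both terms on the right are nonpositive'' does not follow. (Your identification of the identity with $\chi(F)=\chi^{(A,\kappa)}(s)$ obscures this: in the paper $\chi^{(A,\kappa)}$ involves only triangle areas and edge curvatures, and the quadrilateral term is exactly the discrepancy recorded in Lemma \ref{lemma2}.) The paper's key step, which you never state, is that a \emph{strict} angle structure whose triangle areas satisfy $A\le0$ automatically has strictly negative quadrilateral areas: if $q$ misses the edge $e_{i1}$, then
\begin{equation*}
\begin{aligned}
A(q)&=\alpha(e_{i2})+\alpha(e_{i3})+\alpha(e_{i5})+\alpha(e_{i6})-2\pi\\
&<\bigl(\alpha(e_{i1})+\alpha(e_{i2})+\alpha(e_{i3})-\pi\bigr)+\bigl(\alpha(e_{i1})+\alpha(e_{i5})+\alpha(e_{i6})-\pi\bigr)\le 0,
\end{aligned}
\end{equation*}
using $\alpha(e_{i1})>0$. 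Once $A(q)<0$ is in hand, the case you defer as ``the main obstacle'' is a one-line consequence rather than a delicate assembly problem: for an essential sphere or torus, $\chi(F)\ge0$, $\chi^{(A,\kappa)}(s)\le0$ and Lemma \ref{lemma2} force $\sum_q A(q)x_q(s)\ge 0$, hence every quadrilateral coordinate vanishes; an all-triangle normal surface is a union of vertex-linking surfaces, which is boundary-parallel and never essential. So your proposal both omits the lemma on which the basic bound depends and leaves open (by your own admission) the $\chi=0$ case that this lemma immediately settles; as written it is not a complete proof.

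The second gap concerns the boundary cases. An essential disk or annulus is properly embedded with boundary on $\partial M$, and it cannot be normalized into finitely many normal disks with respect to the ideal triangulation $\mathcal T$ of the interior: its normalization is a spun normal surface with infinitely many pieces, so your closed-surface identity does not apply and ``one argues analogously'' is not a proof. The paper handles this by passing to the ideal truncated triangulation $\widetilde{\mathcal T}$ (where $\partial M$ consists of external faces meeting internal faces at dihedral angle $\pi/2$), isotoping $S$ to one of Lackenby's admissible surfaces, and running Lackenby's combinatorial-area argument there to conclude $\chi(S)<0$. Some version of this truncation step is unavoidable in your approach as well, since the sign conditions $A\le0$, $\kappa\le0$ alone do not control the corner contributions along $\partial F$ until that setup is in place; note this also affects your disk argument in the second paragraph, not only the annulus case. (Your direct derivation of $0$-efficiency from the bound, by contrast, is fine once the quadrilateral lemma is established, and is arguably cleaner than the paper's citation of Jaco--Rubinstein.)
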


It must be mentioned that, there had been several closely related concepts derived from angle structures, e.g. the semi-angle structure \cite{KangRubin,KR}, the taut angle structure \cite{Lac} and the generalized angle structure with area-curvature $(A,k)$ \cite{LT}. The angle structure with area-curvature $(0, 0)$ is actually the angle structure in the usual meaning. Luo-Tillmann \cite{LT} showed that for $3$-manifolds with torus or Klein bottle boundaries, the statements $1$ and $2$ of Corollary \ref{corollary2} are equivalent. Following that, Hodgson-Rubinstein-Segerman \cite{HRS} proved that each cusped hyperbolic $3$-manifold with some certain topological conditions has an ideal triangulation that admits an angle structure. As an application, following the spirit of Lackenby \cite{Lac2} and Hodgson-Rubinstein-Segerman \cite{HRS}, our Corollary \ref{corollary2} can be used to proved that, under quite weak topological assumptions, every cusped hyperbolic $3$-manifolds with totally geodesic boundary has an ideal triangulation that admits an angle structure \cite{GJZ}.

The paper is organized as follows. In Section \ref{2}, we introduces fundamental concepts, including polyhedral decomposition theory, ideal triangulation, angle structure, normal surfaces, Farkas's lemma, and Kojima's decomposition. In Section \ref{3}, we employ Farkas's lemma along with the arguments presented by Luo-Tillmann to prove Theorem \ref{theorem1} and Corollary \ref{corollary2}. In Section \ref{4}, we utilize a specific combinatorial method to establish Theorem \ref{theorem3} and Corollary \ref{corollary4}. Finally, Section \ref{5} is dedicated to the proof of Theorem \ref{theorem5}.

~

\noindent
\textbf{Acknowledgements:}
The authors are very grateful to Professor Ruifeng Qiu, Feng Luo, Tian Yang for many discussions on related problems in this paper. The first two authors would like to thank Professor Gang Tian for long-term support and encouragement. Huabin Ge and Longsong Jia are supported by NSFC, no.12341102, Faze Zhang is supported by NSFC, no.12471065.

\section{Preliminaries}\label{2}

\subsection{Triangulations}\label{subsection:2.1}
Following Jaco-Rubinstein~\cite{JR} and Luo~\cite{Luo-3-flow}, we have the following contents.

Let $X=\{\sigma_{i},i=1,2,...,n\}$ be a collection of pairwise disjoint $3$-simplices, and let $\Phi$ be a set of affine isomorphisms between the faces of $3$-simplices in $X$. If $\phi \in \Phi$, then $\phi$ maps the point $x$ on a face of $\sigma_{i}$ to a point $\phi(x)$ on a face of $\sigma_{j}$, where it is possible that $i=j$. The isomorphisms of $\Phi$ thus define an equivalence relation between the points of $X$. The resulting quotient space $X/\Phi$ is a pseudo $3$-manifold. We denote the quotient mapping from $X$ to $X/\Phi$ by $p$. It is noted that the non-manifold points in $X/\Phi$ only exist in the images under $p$ of the centers of some edges and the vertices of $\sigma_{i}$ for $i=1,2,...,n$.

\begin{definition}
Let $\mathcal T$ be the collection of images under $p$ of all faces of $\sigma_{i}$ for $i=1,2,...,n$ in $X$ and denote  $M=X/\Phi$. Then $\mathcal T$ is called a \textbf{triangulation} of $M$. Usually, we also denote that $M=|\mathcal T|$. Under the quotient mapping $p$, all images of $3$-simplices in $X$ are also denoted by $\{\sigma_{i},i=1,2,...,n\}$ and
the images of $0$-faces, $1$-faces, $2$-faces and $3$-faces of $\sigma_{i}(i=1,2,...,n)$ are called \textbf{ vertices, edges, faces and tetrahedra} of $\mathcal T$ respectively.
\end{definition}

We also use $T^{(i)}$ for $i=0,1,2,3$ to denote the the  vertices, edges, faces and tetrahedra of $\mathcal T$ respectively. By Moise~\cite{Moise} and Bing~\cite{Bing}, every compact $3$-manifold has a triangulation.

\begin{definition}\label{def1}
If the link of each vertex in $|\mathcal T|$ is a closed non-sphere surface, then $\mathcal T$ is called an \textbf{ideal triangulation} of $M=|\mathcal T|\backslash|\mathcal T^{(0)}|$. Where the vertices are called the \textbf{ideal vertices}.
\end{definition}

It is noted that with an ideal triangulation $\mathcal T$, $M=|\mathcal T|\setminus|\mathcal T^{(0)}|$ is the interior of a compact pseudo $3$-manifold with non-sphere boundaries. The following are the precisely description.

Suppose $M$ is a compact pseudo $3$-manifold with non-empty boundary whose genera are larger than or equal to $1$. Let $C(M)$ be the compact pseudo $3$-manifold obtained by coning off each boundary component of $M$ to a point. In
particular, if  $\partial M$ has $k$ components, then $C(M)$ has exactly $k$ cone points,
denoted by $\{v_1,...,v_k\}$. Hence $C(M)-\{v_1,...,v_k\}\cong M\setminus\partial M$. The ideal triangulation $\mathcal T$ of $M\setminus\partial M$ is actually the ideal triangulation of $C(M)\setminus\{v_1,...,v_k\}$ as defined in Defintion~\ref{def1}. Observed that the link of each $v_i$ for $i=1,...,k$ in $\mathcal T$, up to isotopy, is exactly the corresponding boundary component of $M$.   By Moise~\cite{Moise}, the interior of every compact $3$-manifold has an ideal triangulation.

\begin{definition}\label{def2}
Let $M$ be a compact pseudo $3$-manifold with non-sphere boundaries and $\mathcal T$ be an ideal triangulation of the interior of $M$. Let $N_i$ be an open regular neighborhood of the ideal vertex $v_i$ in $\mathcal T$ for $i=1,...,k$. If $\widetilde{\mathcal T}=\mathcal T\setminus \sqcup_{i=1}^{k} N_i$ is homeomorphic to  $M$, then  $\widetilde{\mathcal T}$ is called an \textbf{ideal truncated  triangulation} of $M$. Usually, the triangle boundaries of each $3$-cell in $\widetilde{\mathcal T}$ are called the \textbf{truncated vertices}.
\end{definition}

\subsection{Normal surfaces and the compatibility equations}
\label{subsection:2.2}

Let $\mathcal T$ be a triangulation of a pseudo $3$-manifold $M=X/\Phi$ and $\sigma$ be a tetrahedron in $\mathcal T$. A properly embedded arc in a $2$-face $f$ of $\sigma$ such that its end points lie in the interior of two distinct edges on $f$ is called a \emph{normal arc} of $f$. A \emph{normal disk} $D$ in $\sigma$ is a properly embedded disk in  $\sigma$ such that the intersection of $D$ and each $2$-face of $\sigma$ is either a normal arc or empty set. Notice that in $\sigma$, up to isotopy, there are at most seven types of normal disks which consist of four normal triangles and three normal quadrilaterals.

\begin{definition}
A compact surface $F$ in $(M,\mathcal T)$ is called \textbf{normal} if the intersections of $F$ and each tetrahedron of $\mathcal T$ are either empty or a disjoint union of normal disks.
\end{definition}

Let $\{\sigma_1,\cdots,\sigma_n\}$ be all the tetrahedra of $\mathcal T$. Since there are at most seven types of normal disks in each tetrahedron of $\mathcal T$, we can fix an ordering of all normal disc types $(q_1,\cdots, q_{3n}, t_1,\cdots, t_{4n})$ in $\mathcal T$, where $q_i$ denotes a normal quadrilateral type and $t_j$ a normal triangle type for $1\leq i\leq 3n, 1\leq j\leq 4n$. For a given normal surface $F$ in $(M,\mathcal T)$, its normal coordinate $\overline{F}=(x_1,\cdots,x_{3n},y_{1},\cdots,y_{4n})$ is a vector in $\mathbb{R}^{7n}$, where $x_i$ is the number of normal discs of type $q_i$ in $F$, and $y_j$ is the number of normal discs of type $t_j$ in $F$. A properly embedded normal surface $F$ is uniquely determined up to normal isotopy by its normal coordinate, see \cite{LT}\cite{Tillmann}.

Let $f$ be a face shared by two tetrahedra of $(M,\mathcal T)$, say $\sigma_{k}$ and $\sigma_{l}$. Notice that $f$ is not in $\partial M$. Let $a$ be a normal isotopy class of a normal arc in $f$. There are two types of normal disks in $\sigma_{k}$, one is type $x_{i}$ for normal quadrilaterals, the other is type $y_{j}$ for normal triangles, such that when restricted to $f$, the corresponding normal arcs are of the same type as $a$. Similarly, there are also two types of normal disks in $\sigma_{l}$, one is type $x_{i'}$ for normal quadrilaterals, the other is type $y_{j'}$ for normal triangles with similar properties. Thus, the normal coordinate $\overline{F}$ satisfies a linear equation at the face $f$ as follows
\begin{equation}\label{equation1}
x_{i}+y_{j}=x_{i'}+y_{j'}.
\end{equation}
All the equations at every such face of $\mathcal T$ are called the \emph{compatibility equations}. We denote $\mathcal C(M,\mathcal T)$ by the set of all vectors $(x_1,\cdots,x_{3n},y_1,\cdots,y_{4n})\in\mathbb{R}^{7n}$ satisfying (\ref{equation1}) at any such face.

For $\mathcal C(M,\mathcal T)$, Kang-Rubinstein~\cite{KR} introduced a basis
$$\big\{W_{\sigma_{i}},\,W_{e_{j}}\,|\,i=1,\cdots,n,\,j=1,\cdots,m\big\},$$
where $n$ is the number of tetrahedra in $\mathcal T$ and $m$ is the number of edges (not in $\partial M$) in $\mathcal T$. Then for any $s\in \mathcal C(M,\mathcal T)$, there exists a unique coordinate $(\omega_{1},\cdots,\omega_{n},z_{1},\cdots,z_{m} )$ such that
\begin{equation}\label{equation2}
s=\sum_{i=1}^{n}\omega_{i}W_{\sigma_{i}}+\sum_{j=1}^{m}z_{j}W_{e_{j}}.
\end{equation}

The above definitions can be deduced into the other triangulation cases.
\begin{remark}
Suppose $M$ is the interior of a compact $3$-manifold with non-empty boundary and $\mathcal T$ is an ideal triangulation of $M$. Since each edge of $\mathcal T$ is not in $\partial M$ and any face of ideal tetrahedra is attached to another face in $\mathcal T$, the \textbf{normal arc},  \textbf{normal disk}, \textbf{normal surface} and \textbf{compatibility equations} in $(M,\mathcal T)$ are defined the same as the above.
\end{remark}

Since the definition of ideal truncated  triangulations is based on the ideal triangulations, we also have the following interpretations.

\begin{remark}\label{rem}
Suppose $M$ is a compact pseudo $3$-manifold with non-empty boundary  and $\widetilde{\mathcal T}$ be an ideal truncated triangulation of $M$. If we call the edges and faces not in $\partial M$(in $\partial M$) as \textbf{internal edges}(\textbf{external edges}) and \textbf{internal faces}(\textbf{external edges}) respectively, then the \textbf{normal arc} is defined as the properly embedded arc in a internal face $f$ of $\widetilde{\mathcal T}$ such that its end points lie in the interior of two distinct internal edges on $f$.  The following definitions as \textbf{normal disk}, \textbf{normal surface} and \textbf{compatibility equations} in $(M,\widetilde{\mathcal T})$ are the same as the case of ideal triangulations.
\end{remark}

\subsection{Angle structures and Angle structures with area-curvature $(A, \kappa)$}\label{subsection:2.3}
The theory of angle structures has been extensively studied by other scholars, one can refer to \cite{KR}, \cite{Lac2}, \cite{Luo-3-flow}, \cite{Luo-Y} and so on.

\begin{definition}
\label{angle-structure}
Let $M$ be a compact pseudo $3$-manifold with non-empty boundary and  $\mathcal T$ be an ideal triangulation on the interior of $M$ with $3$-simplices $\sigma_1, ... ,\sigma_n$. An \textbf{angle structure} on $(M,\mathcal T)$ is a function $\alpha$ that, for each edge $e_{ij}$ of $\sigma_i$ ($1\leq i\leq n$, $1\leq j \leq 6$), assigns a values $\alpha(e_{ij})\in(0,\pi)$ which is called the dihedral angle, so that:
\begin{enumerate}
\item for any edge $e$ in $\mathcal T$, the sum of all dihedral angles around $e$ is $2\pi$;
\item if $v$ is an ideal vertex of some $\sigma_i$ with $1\leq i\leq n$ such that $v$ is corresponds with the tori boundary, then the sum of all dihedral angles adjacent to $v$ in $\sigma_{i}$ is $\pi$;
\item if $v$ is an ideal vertex of some $\sigma_i$ with $1\leq i\leq n$ such that $v$ is corresponds with the negative Euler characteristic surfaces boundary, then the sum of all dihedral angles adjacent to $v$ in $\sigma_{i}$ is less than $\pi$.
\end{enumerate}
\end{definition}
In the above definition, if all the dihedral angles are allowed to be taken in the closed interval $[0,\pi]$, it is said to be a \emph{semi-angle structure} on $(M,\mathcal T)$. By the work of Bao-Bonahon \cite{BB} ( or see \cite{Luo-Y} and \cite{Schlenker}), any angle structure on $(M,\mathcal T)$ endows each single $3$-simplex $\sigma_i$ with a hyperbolic geometry, making it an ideal tetrahedron or a hyperideal tetrahedron according to its type.

\begin{definition}
\label{angle-structure2}
Let $M$ be a compact $3$-manifold with non-empty boundary and  $\mathcal T$ be an ideal truncated triangulation on  $M$ with $3$-simplices $\sigma_1, ... ,\sigma_n$. An \textbf{angle structure} on $(M,\mathcal T)$ is a function $\alpha$ that, for each external edge $e$ of $\sigma_i$ ($1\leq i\leq n$), assigns the value $\frac{\pi}{2}$ and  for each internal edge $e_{ij}$ of $\sigma_i$ ($1\leq i\leq n$, $1\leq j \leq 6$), assigns a value $\alpha(e_{ij})\in(0,\pi)$ where $\frac{\pi}{2}$ and $\alpha(e_{ij})$ are called the \textbf{dihedral angles}, so that:
\begin{enumerate}
\item for any external edge $e$ in $\mathcal T$, the sum of all dihedral angles around $e$ is $\pi$;
\item for any internal edge $e$ in $\mathcal T$, the sum of all dihedral angles around $e$ is $2\pi$;
\item if the truncated vertex $v$ is in $\sigma_i$ with $1\leq i\leq n$ such that $v$ is corresponds with the tori boundary, then the sum of all dihedral angles at internal edges adjacent to $v$ in $\sigma_{i}$ is $\pi$;
\item if the truncated vertex $v$ is in $\sigma_i$ with $1\leq i\leq n$ such that $v$ is corresponds with the negative Euler characteristic surfaces boundary, then the sum of all dihedral angles at internal edges adjacent to $v$ in $\sigma_{i}$ is less than $\pi$.
\end{enumerate}
\end{definition}
By Kojima~\cite{Kojima}, any angle structure on $(M,\mathcal T)$ endows each single $3$-simplex $\sigma_i$ with a hyperbolic geometry, making it a truncated hyperideal tetrahedron.

The following definitions come from Luo-Tillmann~\cite{LT}.

As defined in Section \ref{subsection:2.1}, let $M=X/\Phi$ be a pseudo $3$-manifold with  a triangulation $\mathcal T$ and $\sigma$ be a tetrahedron of $\mathcal T$.  Suppose $D$ is a normal disk with vertices $\{v_1,\cdots,v_p\}$ for $p=3,4$  in $\sigma$. A \emph{combinatorial angle structure} on $D$ is a function $a:\{v_1,\cdots,v_p\}\rightarrow \mathbb R$ and the value $a(v_i)$ is called the angle at $v_i$. The \emph{combinatorial area} of $D$ given by a combinatorial angle structure $a$ on $D$, denoted by $A(D)$, is defined as
\begin{equation}
\label{e1}
A(D)=\sum_{i=1}^{k}a(v_i)-(k-2)\pi.
\end{equation}

Let $e$ be an edge in the triangulation $\mathcal T$ of $M$  and $\sigma_1,\cdots,\sigma_{n_1}$ be the tetrahedra of $\mathcal T$ that share $e$ as a common edge. Suppose there is a function $b_j:(e,\sigma_j)\rightarrow \mathbb R$ for $1\leq j \leq n_1$, then the \emph{curvature} of $e$, denoted by $\kappa(e)$, is defined as

when $e$ is not in $\partial M$,
\begin{equation}
\label{e2}
\kappa(e)=2\pi-\sum_{j=1}^{n_1}b_j(e,\sigma_j).
\end{equation}

and when $e$ is contained in $\partial M$,
\begin{equation}
\label{e3}
\kappa(e)=\pi-\sum_{j=1}^{n_1}b_j(e,\sigma_j).
\end{equation}

\begin{definition}
\label{angle-structure}
Let $\mathcal T$ be a triangulation of a psendo 3-manifold $M$ with 3-simplices $\{\sigma_1, \cdots ,\sigma_n\}$ and $(A,\kappa)$  be a pair of functions that acts on all normal triangles and all edges in $\mathcal T$. A \textbf{generalised angle structure with area-curvature $(A,\kappa)$} on $(M,\mathcal T)$ is a function $\alpha$ that, for each edge $e_{ik}$ of $\sigma_i$ ($1\leq i\leq n$, $1\leq k \leq 6$), assigns a real values $\alpha(e_{ik})$ which is called the \textbf{dihedral angle}, so that:
\begin{enumerate}
\item For any vertex $v$ of some $\sigma_i$ with $1\leq i\leq n$ in $\mathcal T$,  there is a normal triangle $t$ of $\sigma_i$ which is correspond to $v$. Then the combined area of $t$ given by the function $\alpha$  is $A(t)$.

\item for any edge $e$ in $\mathcal T$, the curvature of $e$ given by the function $\alpha$ is $\kappa(e)$;
\end{enumerate}
\end{definition}

In the above definition, if all the dihedral angles are allowed to be taken in the closed interval $[0,\pi]$ or the open interval $(0,\pi)$ , it is said to be a \emph{semi-angle structure with area-curvature $(A,\kappa)$} or  a \emph{angle structure with area-curvature $(A,\kappa)$} respectively on $(M,\mathcal T)$.

The angle structure on ideal triangulations and ideal truncated triangulations are actually the special cases of the angle structure with area-curvature $(A,\kappa)$. In fact,

\begin{remark}
Suppose $M$ is the interior of a compact $3$-manifold with tori or Klein bottles boundaries and  $\mathcal T$ is an ideal triangulation of $M$. Then the angle structure of $(M,\mathcal T)$ is actually the angle structure  with area-curvature $(0,0)$ on $(M,\mathcal T)$.
\end{remark}


\begin{remark}
Suppose $M$ is a compact $3$-manifold with non-empty boundary and $\widetilde{\mathcal T}$ is an ideal truncated triangulation of $M$. If the boundary surfaces of $M$ are consist of tori(or Klein bottles) and negative Euler characteristic surfaces, then the angle structure on $(M,\widetilde{\mathcal T})$ is the angle structure with area-curvature $(A\leq 0,\kappa=0)$ on $(M,\widetilde{\mathcal T})$. If the boundary surfaces of $M$ are consist of negative Euler characteristic surfaces, then the angle structure on $(M,\widetilde{\mathcal T})$ is the angle structure with area-curvature $(A< 0,\kappa=0)$ on $(M,\widetilde{\mathcal T})$.
\end{remark}

\subsection{Area-curvature function}\label{2.4}
Let $(M, \mathcal T)$ be a pseudo $3$-manifold with a triangulation $\mathcal T$ and $F$ be an embedded or immersed normal surface $F$ in $(M, \mathcal T)$. If the coordinate of $F$ is $(x_1,\cdots,x_{3n}, y_{1},\cdots,y_{4n})$ where $n$ is the number of tetrahedra in  $\mathcal T$. Then the \emph{generalized Euler characteristic} function $\chi^{*}: \mathbb{R}^{7n}\rightarrow \mathbb{R}$ is defined as
\begin{equation}
\label{e4}
\chi^{*}(x_1,...,x_{3n},y_{1},...,y_{4n} )=\sum_{i=1}^{3n}x_{i}\chi^{*}(q_{i})+\sum_{j=1}^{4n}y_{j}\chi^{*}(t_{j}),
\end{equation}
where $q_{i}$ and $t_{j}$ are the normal quadrilaterals and normal triangles in $F$ respectively.

In the above, for a normal triangle $t$ in a tetrahedra $\sigma_i$, if $b(t)$ is the number of normal arcs in $t\cap\partial M$ and $t\cap \sigma_i$ consists of three  edges $e_{ik},(k=1,2,3)$ with the edges valence $d_k$ where $1\leq k \leq 3$, then
\begin{equation}
\label{e5}
\chi^{*}(t)=-\frac{1}{2}\big(1+b(t)\big)+\sum_{k=1}^3\frac{1}{d_k}.
\end{equation}
For a normal quadrilateral  $q$ in a tetrahedra $\sigma_i$, if $b(q)$ is the number of normal arcs in $q\cap\partial M$ and $q\cap \sigma_i$ consists of four  edges $e_{ik'},(k'=1,2,3,4)$ with the edges valence $d_{k'}$ where $1\leq k' \leq 4$, then
\begin{equation}
\label{e6}
\chi^{*}(q)=-\frac{1}{2}\big(2+b(q)\big)+\sum_{k'=1}^4\frac{1}{d_{k'}}.
\end{equation}

If we denote $(x_1,\cdots,x_{3n}, y_{1},\cdots,y_{4n})$ by $\overline{F}$, then there is a fact that $\chi^{*}(\overline{F})=\chi(F)$. See Kang-Rubinstain~\cite{KR}.

If $\overline{F}$ corresponds with a element of $\mathcal C(M,\mathcal T)$, denoted by   $s=\sum\limits_{i=1}^{n}\omega_{i}W_{\sigma_{i}}+\sum\limits_{j=1}^{m}z_{j}W_{e_{j}}$, then the \emph{Area-curvature function} on $s$ is defined as
\begin{equation}\label{equation3}
\chi^{(A,\kappa)}(s)=\frac{1}{2\pi} (\sum_{t\in\bigtriangleup}y_{t}(s)A(t)+\sum_{j=1}^n 2z_j \kappa(e_j)),
\end{equation}
where $\bigtriangleup$ is the set of all the normal triangles of $s$, and $y_{t}$ is the corresponding normal triangle coordinate of $s$.

Luo-Tillmann (\cite{LT}, Lemma 15) got the following relationship between $\chi^{*}$ and $\chi^{(A,\kappa)}$:

\begin{lemma}\label{lemma2}
Suppose $(M, \mathcal T)$ admits an angle (or semi-angle) structure $\alpha$ with area-curvature $(A,\kappa)$. If $s\in \mathcal C(M,\mathcal T)$, then
\begin{equation}
\chi^{(A,\kappa)}(s)=\chi^{*}(s)-\frac{1}{2\pi}\sum_{q\in \square}A(q)x_{q}(s),
\end{equation}
where $\square$ is the set of all the normal quadrilaterals of $s$ and $A(q)$ is the combinatorial area of $q$ induced by the angle (or semi-angle) structure $\alpha$.

\end{lemma}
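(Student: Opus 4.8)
The plan is to recast the claimed identity as a combinatorial Gauss--Bonnet formula and verify it termwise. Since $\chi^{*}$, $\chi^{(A,\kappa)}$ and the right--hand side are all $\mathbb R$--linear on $\mathcal C(M,\mathcal T)$, after multiplying by $2\pi$ and using the definition (\ref{equation3}) the assertion becomes equivalent to
\[
2\pi\,\chi^{*}(s)=\sum_{t\in\bigtriangleup}y_t(s)A(t)+\sum_{q\in\square}x_q(s)A(q)+2\sum_{j}z_j\,\kappa(e_j),
\]
that is, $2\pi\chi^{*}(s)$ equals the total combinatorial area of $s$ plus the total curvature weighted by the edge coordinates. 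I would establish this in three moves: a per--disk computation, a regrouping of the angle terms around the edges of $\mathcal T$, and the identification of the resulting corner counts with the coordinates $z_j$.

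First, inserting the definitions (\ref{e1}), (\ref{e5}), (\ref{e6}) yields a single formula valid for every normal disk $d$ (triangle or quadrilateral):
\[
2\pi\chi^{*}(d)-A(d)=-\pi\,b(d)+\sum_{c}\Big(\tfrac{2\pi}{d_c}-\alpha(e_c)\Big),
\]
the sum running over the corners $c$ of $d$, where $e_c$ is the edge of $\mathcal T$ at $c$ and $d_c$ its valence. Weighting by the normal coordinates and summing over all disks of $s$ converts the right--hand side into a sum over all corners of $s$, which I then regroup according to the edge of $\mathcal T$ on which each corner lies.

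The engine of the regrouping is that, for $s\in\mathcal C(M,\mathcal T)$, the weighted number $v_\sigma(e)$ of corners of $s$ at an edge $e$ inside a tetrahedron $\sigma\ni e$ is independent of $\sigma$. Indeed $v_\sigma(e)$ equals the sum of the two arc counts cutting off the endpoints of $e$ measured on either face of $\sigma$ through $e$, and the compatibility equations (\ref{equation1}) force these arc counts to agree across each face around $e$; hence $v_e:=v_\sigma(e)$ is well defined and records the (weighted) number of vertices of $s$ on $e$. Each such vertex then carries the full angle $\sum_{\sigma\ni e}\alpha(e,\sigma)=2\pi-\kappa(e)$ around $e$ by (\ref{e2}), so the angle part of the regrouped sum collapses to $\sum_e v_e(2\pi-\kappa(e))$ and the valence part to $2\pi\sum_e v_e$; after cancellation one gets $2\pi\chi^{*}(s)-\sum_d(\mathrm{coord})A(d)=\sum_{e}v_e\,\kappa(e)$ in the closed case, where $b(d)\equiv 0$.

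It remains to match $\sum_e v_e\kappa(e)$ with $2\sum_j z_j\kappa(e_j)$, and this is where I expect the main difficulty. By linearity it suffices to evaluate the functional $v_e$ on the Kang--Rubinstein basis. For the tetrahedral class, which one may take to be $W_{\sigma_i}=q_1+q_2+q_3-t_1-t_2-t_3-t_4$, a direct count gives $v_e(W_{\sigma_i})=(-1)+(-1)+1+1=0$ on every edge, since each edge of $\sigma_i$ is crossed by two of its quadrilaterals and met by two of its vertex triangles; so these classes carry no curvature. For the edge class one needs $v_{e_j}(W_{e_k})=2\delta_{jk}$, i.e. that $W_{e_k}$ has exactly two corners at $e_k$ and none elsewhere. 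Granting this defining property of the basis gives $v_{e_j}(s)=2z_j$ and hence the lemma. The delicate point is thus the corner--bookkeeping of the edge classes $W_{e_k}$; I would also treat the truncated (boundary) case separately, where the boundary--arc terms $-\pi b(d)$ must be absorbed by the boundary edges through the convention $\kappa(e)=\pi-\sum_{\sigma}\alpha(e,\sigma)$ of (\ref{e3}), the curvature sum ranging only over interior edges.
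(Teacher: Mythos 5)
First, a point of comparison: the paper does not prove Lemma \ref{lemma2} at all; it is quoted from Luo--Tillmann \cite{LT} (Lemma 15 there). So your proposal is necessarily a different route from ``the paper's proof,'' and the real question is whether your reconstruction is sound. In outline it is, and it is the right kind of argument: the reduction by linearity to $2\pi\chi^{*}(s)=\sum_{t}y_{t}(s)A(t)+\sum_{q}x_{q}(s)A(q)+2\sum_{j}z_{j}\kappa(e_{j})$ is exactly equivalent to the stated identity; your per-disk formula is correct for triangles and quadrilaterals alike (the $(k-2)\pi$ and $-\tfrac12(k-2)$ normalizations cancel exactly as you claim); the well-definedness of the weighted corner count $v_{e}$ does follow from the compatibility equations (\ref{equation1}) by passing around the edge; the regrouped sum does collapse to $\sum_{e}v_{e}\kappa(e)$ at interior edges via (\ref{e2}); and $v_{e}(W_{\sigma_{i}})=0$ is correct.

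Two steps remain open, and you flagged them yourself. The first, $v_{e_{j}}(W_{e_{k}})=2\delta_{jk}$, is not a ``defining property'' of the Kang--Rubinstein basis: the basis is defined by explicit disk weights, and since this is the only place the factor $2$ in (\ref{equation3}) gets produced, it must be checked rather than granted. The check is short and you should include it: in each tetrahedron $\sigma$ around $e$, the edge solution restricts to $W_{e}|_{\sigma}=t_{1}+t_{2}-q_{e}$, where $t_{1},t_{2}$ are the two normal triangles at the endpoints of $e$ and $q_{e}$ is the quadrilateral of $\sigma$ disjoint from $e$; the corner count at $e$ is then $1+1+0=2$, at each of the four edges of $\sigma$ met by $q_{e}$ it is $1+0-1=0$, and at the edge opposite $e$ it is $0$, which gives $v_{e_{j}}(W_{e_{k}})=2\delta_{jk}$ and closes your argument when $\partial M=\emptyset$. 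The second open step, the boundary case, is not just a sign convention as your last sentence suggests. Carrying your regrouping through there, the boundary-arc terms give $-\pi\sum_{d}(\mathrm{coord}_{d})\,b(d)=-\pi\sum_{e\subset\partial M}v_{e}$ (each boundary edge lies in exactly two boundary faces and each boundary arc has two endpoints), while the corners at a boundary edge contribute $v_{e}\bigl(2\pi-(\pi-\kappa(e))\bigr)=v_{e}\bigl(\pi+\kappa(e)\bigr)$ by (\ref{e3}); together these produce an extra term $\sum_{e\subset\partial M}v_{e}\kappa(e)$ which does not appear in (\ref{equation3}) at all, since the curvature sum there runs only over interior edges. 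The identity survives because every basis element $W_{\sigma_{i}}$ and $W_{e_{k}}$ (with $e_{k}$ interior) has zero corner count at every boundary edge, so $v_{e}\equiv 0$ on $\mathcal C(M,\mathcal T)$ for $e\subset\partial M$ and the extra term vanishes --- but that observation, not the convention (\ref{e3}), is what finishes the proof, and it is missing from your sketch.
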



\subsection{Farkas's lemma}\label{sec1}

If $x=(x_{1},\cdots,x_{k})\in \mathbb{R}^{k}$, we use $x>0$ ($x\geq 0$, $x<0$, $x\leq 0$ resp.) to mean that all components of $x_{i}$ are positive (non-negative, negative, non-positive resp.). To approach our main results, we need the following duality result from linear programming, which is known as Farkas's lemma, and can be found, for instance, in \cite{Ziegler}. In the following lemma, vectors in $\mathbb{R}^{k}$ and $\mathbb{R}^{l}$ are considered to be column vectors.

\begin{lemma}\label{lemma}
Let $A$ be a real $k\times t$ matrix, $b\in \mathbb{R}^{k}$, and $\cdot$ be the inner product on $\mathbb{R}^{k}$.
\begin{enumerate}
\item $\{x\in \mathbb{R}^{t}|Ax=b\}\neq \emptyset$ if and only if for all $y\in\mathbb{R}^{k}$ such that $A^{T}y=0$, one has $y\cdot b=0$.
\item $\{x\in \mathbb{R}^{t}|Ax=b,x\geq 0\}\neq \emptyset$ if and only if for all $y\in\mathbb{R}^{k}$ such that $A^{T}y\leq 0$, one has $y\cdot b\leq0$.
\item $\{x\in \mathbb{R}^{t}|Ax=b,x>0\}\neq \emptyset$ if and only if for all $y\in\mathbb{R}^{k}$ such that $A^{T}y\neq0$ and $A^{T}y\leq0$, one has $y\cdot b<0$.
\end{enumerate}
\end{lemma}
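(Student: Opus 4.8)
The plan is to treat part (2) as the analytic core and to obtain parts (1) and (3) from it by elementary reformulations. The geometric content of (2) is that $b$ lies in the finitely generated convex cone $C=\{Ax:x\ge 0\}=\operatorname{cone}(a_1,\dots,a_t)$ spanned by the columns $a_1,\dots,a_t\in\mathbb{R}^{k}$ of $A$ if and only if $b$ cannot be separated from $C$ by a hyperplane through the origin. The forward implication is a one-line computation: if $Ax=b$ with $x\ge 0$ and $A^{T}y\le 0$, then $y\cdot b=y^{T}Ax=(A^{T}y)\cdot x\le 0$, since every coordinate of $A^{T}y$ is $\le 0$ and every coordinate of $x$ is $\ge 0$. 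For the converse I would argue by contraposition: assuming $b\notin C$, I want to manufacture a separating vector $y$.

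The key technical step, and the one I expect to be the main obstacle, is the fact that a finitely generated cone $C$ is \emph{closed}. Once closedness is in hand, the separating-hyperplane theorem applied to the point $b$ and the closed convex set $C$ produces a $y$ with $y\cdot b>0$ and $y\cdot c\le 0$ for all $c\in C$ (the cone containing $0$ forces the supremum of $y\cdot c$ over $C$ to be exactly $0$); taking $c=a_i$ gives $A^{T}y\le 0$ while $y\cdot b>0$, contradicting the right-hand condition of (2). Hence that condition forces $b\in C$, which is (2). To prove closedness I would invoke the conic Carath\'eodory theorem: every point of $C$ is a non-negative combination of a \emph{linearly independent} subfamily of the $a_i$, so $C$ is a finite union of simplicial cones, each of which is manifestly closed, and a limiting argument (or the standard induction on the number of generators, as in Fourier--Motzkin elimination) upgrades this to closedness of the union.

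Part (1) then follows from (2) via the splitting $x=x^{+}-x^{-}$ with $x^{\pm}\ge 0$: the system $Ax=b$ is solvable exactly when $[\,A\mid -A\,]\binom{x^{+}}{x^{-}}=b$ is solvable in non-negative variables. Applying (2) to $\widetilde A=[\,A\mid -A\,]$, the dual condition $\widetilde A^{T}y\le 0$ reads $A^{T}y\le 0$ together with $-A^{T}y\le 0$, that is $A^{T}y=0$; since this constraint is symmetric under $y\mapsto -y$, the conclusion $y\cdot b\le 0$ upgrades to $y\cdot b=0$, which is precisely the condition in (1). Part (3) is the strict variant and carries an additional subtlety: writing $x>0$ as $x\ge\varepsilon\mathbf 1$ for some $\varepsilon>0$ and substituting $x=x'+\varepsilon\mathbf 1$ reduces solvability to finding $\varepsilon>0$ and $x'\ge 0$ with $Ax'=b-\varepsilon\,(A\mathbf 1)$, where $A\mathbf 1$ is the sum of the columns. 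The strict inequality $\varepsilon>0$ cannot be folded directly into a non-negative feasibility problem, so I would instead study the closed interval of admissible $\varepsilon\ge 0$ furnished by part (2) and show that its supremum is positive exactly under the stated dual condition, or equivalently appeal to Motzkin's transposition theorem, which treats mixed strict and non-strict inequalities uniformly. A cleaner but less geometric route for all three parts is to run Fourier--Motzkin elimination from the outset, as in Ziegler; this bypasses the closedness lemma at the cost of the bookkeeping required to eliminate variables while tracking which rows are strict.
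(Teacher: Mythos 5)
First, a remark on the comparison itself: the paper does not prove Lemma \ref{lemma} at all --- it quotes it as a standard duality result and refers to \cite{Ziegler}, where these alternatives are derived by Fourier--Motzkin elimination. So your proposal can only be judged against that standard treatment, and for parts (1) and (2) it holds up: the forward implication of (2) is the one-line computation you give; the converse correctly reduces to closedness of the finitely generated cone $C=\{Ax : x\ge 0\}$ plus strict separation, and the cone-containing-$0$ argument forcing $\sup_{c\in C} y\cdot c=0$ is right. Two small notes there: the only real content in the closedness step is that a \emph{simplicial} cone is closed (an injective linear map carries the closed orthant homeomorphically onto its image); after conic Carath\'eodory, $C$ is a \emph{finite} union of such cones, and a finite union of closed sets is closed --- no limiting argument or further induction is needed. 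Your derivation of (1) from (2) via $x=x^{+}-x^{-}$ and the symmetry $y\mapsto -y$ is exactly correct.

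Part (3) is where there is a genuine gap, and it is not merely that you defer the argument (``study the supremum of admissible $\varepsilon$, or appeal to Motzkin''): the equivalence in (3), \emph{as printed}, is false in degenerate cases, so no completion of your plan can establish it verbatim. Take $k=t=1$, $A=(0)$, $b=1$: then $\{x\mid Ax=b,\,x>0\}=\emptyset$, yet no $y$ satisfies $A^{T}y\neq 0$, so the dual condition holds vacuously. More generally the ``if'' direction fails exactly when $\{y\mid A^{T}y\le 0\}=\ker A^{T}$ (equivalently, when the cone $\{Ax\mid x\ge0\}$ is a linear subspace) and $b\notin\operatorname{col}(A)$; your $\varepsilon$-interval is then empty while the dual condition is vacuous, which is precisely the wall your plan would hit. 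The correct strict variant adds the condition of part (1): $\{x\mid Ax=b,\,x>0\}\neq\emptyset$ if and only if \emph{both} ($A^{T}y=0\Rightarrow y\cdot b=0$) \emph{and} ($A^{T}y\le 0,\ A^{T}y\neq 0\Rightarrow y\cdot b<0$). The second condition implies the first whenever it is non-vacuous (given $A^{T}y_{1}=0$, apply it to $y_{0}+\lambda y_{1}$ for all $\lambda\in\mathbb{R}$), which is why the omission is invisible in practice; in this paper it is also harmless, because part (3) is only invoked (in the proof of Theorem \ref{theorem1}) after the semi-angle structure has already produced a solution of $Bx=(a,b)$ with $x\ge 0$, and part (2) then supplies the missing condition. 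To finish your write-up, prove the corrected two-condition statement; your separation framework does carry it, via the observation that $\{Ax\mid x>0\}=\operatorname{relint}(C)$ (linear images commute with relative interiors), so that the two dual conditions say exactly that $b\in\operatorname{span}(C)$ and that every supporting hyperplane of $C$ through $b$ contains all of $C$. Motzkin's transposition theorem likewise yields the corrected statement, not the printed one.
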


\subsection{Kojima decomposition}\label{sec2}

To prove Theorem \ref{theorem5}, we need to demonstrate Kojima's decomposition of a hyperbolic $3$-manifold with totally geodesic boundary into truncated hyperideal polyhedra.

Following Bao-Bonahon~\cite{BB}, in the projective space model $B^{3}\subset RP^{3}$ of $\mathbb{H}^3$, a \emph{hyperideal polyhedra} refers to the intersection $\hat{P}$ of $\mathbb{H}^3$ with a compact convex polyhedra $\tilde{P}$ of $RP^{3}$ such that

\begin{enumerate}
\item each vertex of $\tilde{P}$ lies outside of $\mathbb{H}^3$ which is called the \emph{hyperideal vertex}, and
\item each edge of $\tilde{P}$ intersects $\mathbb{H}^3$ non-empty.
\end{enumerate}

\emph{Truncation} of a hyperideal polyhedra $\hat{P}$ at the hyperideal vertex $v$ in $\tilde{P}$ is defined as cutting off the thick end towards $v$ from $\hat{P}$ by a geodesic plane $H_v$ which is perpendicular to the adjacent geodesic faces at $v$. According to Kojima's lemma $2.1$~\cite{Kojima}, this truncation is unique. The resulting $3$-cell $P$ after truncation at every hyperideal vertex of $\tilde{P}$ is called a \emph{truncated  hyperideal polyhedron}. The boundary of $P$ consists of the intersections of $P$ with the faces of $\tilde{P}$ and the intersections of $\tilde{P}$ with the planes $H_v$. The former are referred as the \emph{internal faces} and the latter as the \emph{external faces} of $P$. The internal faces of $P$ are all right-angled hyperbolic polygons, while the external faces of $P$ are hyperbolic polygons.

It is noteworthy that a truncated hyperideal tetrahedron is a specific type of truncated hyperideal polyhedron where its untruncated polyhedron is a tetrahedron in $\mathbb{RP}^3$. See Figure  ~\ref{Fig2} for an example.

\begin{figure}[htbp]
\centering
\includegraphics[scale=1.15]{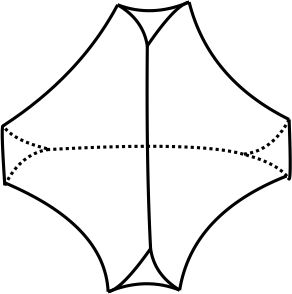}
\caption{Truncated Hyperideal Tetrahedron}
\label{Fig2}
\end{figure}

In~\cite{Kojima}, Kojima established the following theorem:

\begin{theorem}[Kojima]\label{theoremK}
Every hyperbolic $3$-manifold with totally geodesic boundary is a quotient of finitely many hyperideal polyhedra in $\mathbb{H}^3$ by isometries between pairs of faces.
\end{theorem}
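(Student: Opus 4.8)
The plan is to reconstruct the decomposition by a convex‑hull argument in Minkowski space, in the spirit of the Epstein–Penner construction for cusped manifolds, adapted to totally geodesic boundary (this is essentially Kojima's original method). Realize $\mathbb{H}^3$ as the upper sheet of $\{x\in\mathbb{R}^{3,1}:\langle x,x\rangle=-1\}$, where $\langle\cdot,\cdot\rangle$ has signature $(3,1)$. The crucial duality is that oriented totally geodesic planes in $\mathbb{H}^3$ correspond bijectively to spacelike unit vectors on the de Sitter hyperboloid $S=\{w:\langle w,w\rangle=+1\}$: the plane dual to $w$ is $\Pi_w=\{x\in\mathbb{H}^3:\langle x,w\rangle=0\}$, and the sign of $\langle\cdot,w\rangle$ distinguishes its two half-spaces. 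This is the exact analogue of the light-cone duality used for cusps, with the light cone replaced by $S$.

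First I would lift the structure. Write $M=\mathbb{H}^3/\Gamma$ with $\Gamma=\pi_1(M)$, identifying $M$ with its convex core; since $\partial M$ is totally geodesic, $M$ equals its own convex core and is compact. The preimage of $\partial M$ in $\mathbb{H}^3$ is a $\Gamma$-invariant, locally finite family of pairwise disjoint geodesic planes $\{\Pi_\alpha\}_{\alpha\in I}$, and $\widetilde M=\bigcap_\alpha\Pi_\alpha^{+}$ is the universal cover of $M$. Let $\{w_\alpha\}\subset S$ be the corresponding dual spacelike vectors, oriented so that $\langle\cdot,w_\alpha\rangle\le 0$ cuts out the half-space containing $\widetilde M$. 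I would then form the convex hull $\mathcal C=\mathrm{Conv}(\{w_\alpha\})$ in $\mathbb{R}^{3,1}$ and study the codimension-$1$ faces $F$ of $\partial\mathcal C$ whose affine span has a timelike normal vector $p_F$ (i.e.\ $\langle p_F,p_F\rangle<0$). Normalizing, $p_F/\sqrt{-\langle p_F,p_F\rangle}$ is a point of $\mathbb{H}^3$ playing the role of the ``center'' dual to the face.

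Next I would dualize each such face back into $\mathbb{H}^3$. The vertices of $F$ are among the $w_\alpha$, which, being spacelike, are hyperideal vertices; the edges of $F$ join adjacent $w_\alpha$, and the common perpendicular geodesics meet $\mathbb{H}^3$, so radial projection of $F$ yields a compact convex polyhedron $\widetilde P_F\subset\mathbb{RP}^3$ whose intersection with $\mathbb{H}^3$ is a hyperideal polyhedron $\widehat P_F$ in the sense of the definition above. Truncating $\widehat P_F$ at each hyperideal vertex by the dual plane $\Pi_\alpha$ (unique by Kojima's lemma~$2.1$ \cite{Kojima}, as recalled before Theorem~\ref{theoremK}) produces a truncated hyperideal polyhedron whose internal faces lie on the boundary planes $\Pi_\alpha$ and whose remaining faces pair up with those of adjacent cells. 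These pieces tile $\widetilde M$, and the internal/external structure matches the Bao–Bonahon picture \cite{BB}. Finally I would descend: the construction uses only the isometric data $\{\Pi_\alpha\}$, hence is $\Gamma$-equivariant, and since $M$ is compact there are finitely many faces of $\partial\mathcal C$ modulo $\Gamma$; thus the tiling of $\widetilde M$ descends to a decomposition of $M$ into finitely many (truncated) hyperideal polyhedra glued by face-pairing isometries.

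The main obstacle will be establishing the convexity and finiteness facts underlying the second paragraph: that $\{w_\alpha\}$ is discrete, that $\mathcal C$ has a locally finite face structure, and above all that the relevant faces have genuinely timelike normals (equivalently, that no supporting hyperplane forces a face to ``see infinity''). This timelike condition is precisely what encodes a uniform lower bound on the distance between distinct boundary planes $\Pi_\alpha$, and it is here that compactness of $M$ must be used decisively. This is the exact analogue of the technical heart of Epstein–Penner, but the estimates cannot be quoted verbatim: because the dual points lie on the one-sheeted de Sitter hyperboloid $S$ rather than on the light cone, the convexity and support-function arguments must be redone in that indefinite setting, which is where I expect the real work to concentrate.
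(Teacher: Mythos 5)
A preliminary remark on the comparison itself: the paper does not prove Theorem~\ref{theoremK} at all --- it is quoted as a black box from \cite{Kojima}, and the surrounding text only supplies the definitions (hyperideal polyhedra, truncation, internal/external faces) needed to state it. So your attempt can only be measured against Kojima's original argument, and your route is in fact exactly that one: Kojima's proof is the Epstein--Penner-type construction you describe, dualizing the lifts of $\partial M$ to spacelike points of the de Sitter quadric, taking the Minkowski convex hull, and projecting the faces with timelike normals back into $\mathbb{H}^3$. Several details of your sketch are sound; in particular your sign convention $\langle\cdot,w_\alpha\rangle\le 0$ is what makes edge-hyperideality work, since for two disjoint lifts facing a common region one gets $\langle w_\alpha,w_\beta\rangle<-1$, hence the segment $[w_\alpha,w_\beta]$ contains timelike vectors and its projectivization crosses the ball, which is condition (2) in the paper's definition of a hyperideal polyhedron.

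As a proof, however, the proposal has genuine gaps beyond the ones you flag. First, the sentence ``These pieces tile $\widetilde M$'' is an assertion, not an argument: one must show that the radial projections of distinct faces of $\partial\mathcal{C}$ have pairwise disjoint interiors and that their union is all of $\widetilde M$ (equivalently, that every timelike ray over a point of $\widetilde M$ meets the spacelike-supported part of $\partial\mathcal{C}$ exactly once, and no others do); this Delaunay--Voronoi type step, tied to the cut locus of $\partial M$, is a substantial part of \cite{Kojima} and cannot be waved through. Second, the facts you defer to your last paragraph --- discreteness of $\{w_\alpha\}$, local finiteness of the face structure, and above all that every relevant face has a timelike normal --- are not a routine transplant of the Epstein--Penner estimates but the technical core of Kojima's paper: the planes $\Pi_\alpha$ accumulate on the limit set, so the dual points escape to infinity along the light cone, and excluding faces with lightlike or spacelike support hyperplanes is precisely where compactness of $M$ must be used. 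Until those claims are established the theorem is not proved. Finally, a smaller but real error: you present compactness of $M$ as a consequence of the boundary being totally geodesic (``$M$ equals its own convex core and is compact''), which is false --- compactness is a standing hypothesis (implicit in the statement and in its use in Corollary~\ref{corollary4}), not something the geometry of $\partial M$ gives you, and without it the finiteness conclusion fails.
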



\section{The proof of Theorem \ref{theorem1} and Corollary \ref{corollary2} }\label{3}
Let $M$ be a $3$-dimensional pseudo-manifold(with or without boundary) and $\mathcal T$ be an triangulation of $M$. We denote  all the tetrahedra in $\mathcal T$ by $\{\sigma_1,\cdots,\sigma_n\}$. Suppose $\alpha$ is a semi-angle structure(or angle structure) with area-curvature $(A,\kappa)$ where $A\leq 0$.

As in Definition \ref{angle-structure}, $\alpha$ assigns a dihedral angle to each edge $e_{ik}$ of $\sigma_i$ for $1\leq i \leq n$ and $1\leq k\leq 6$.  Then we can consider all the dihedral angles $\alpha_{ik}=\alpha(e_{ik})$  as  variables of the equations in Farkas' Lemma \ref{lemma}, see Figure \ref{Fig5}.

\begin{figure}[htbp]
\centering
\includegraphics[scale=0.35]{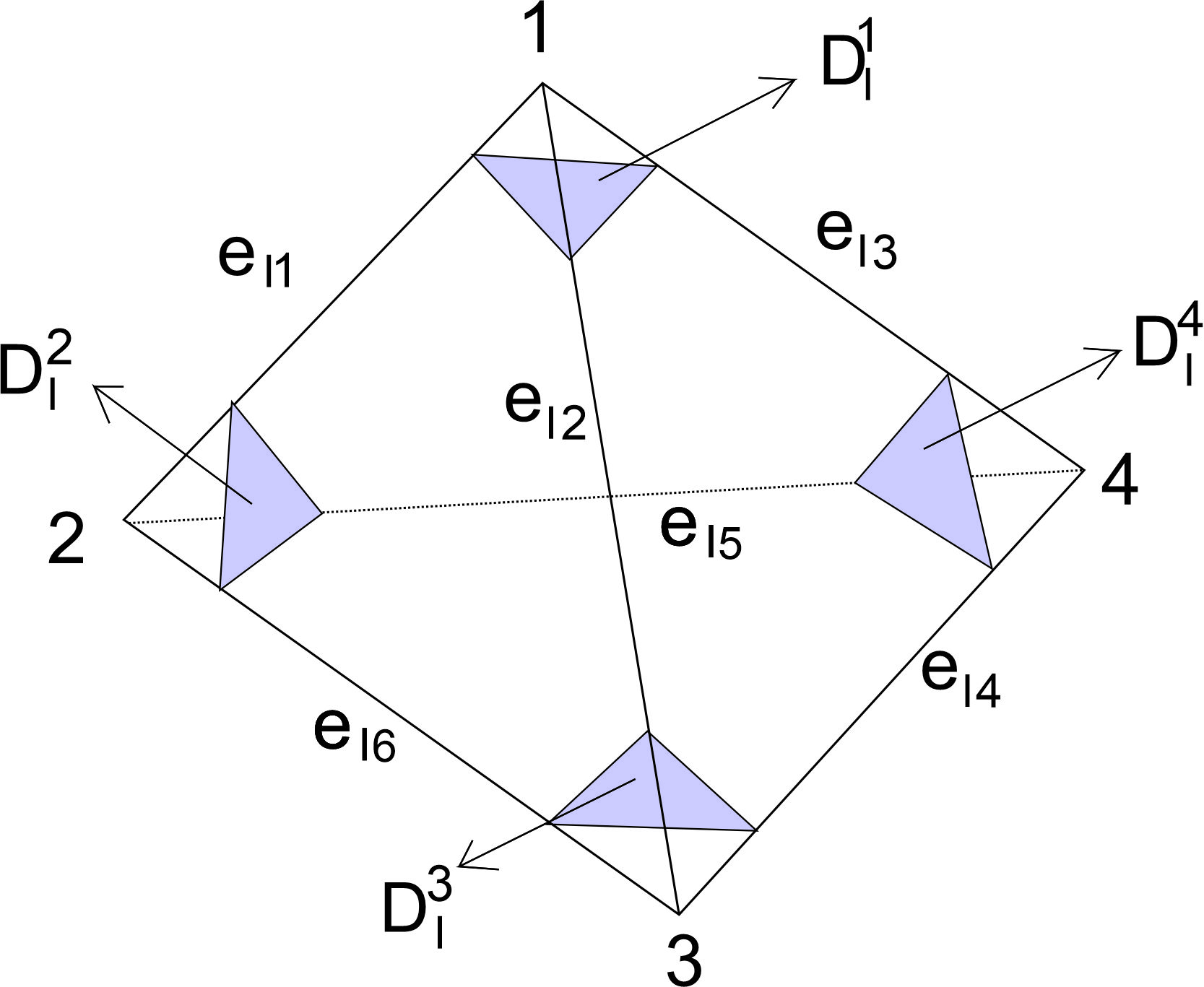}
\caption{the edge labelling and normal triangles in $\sigma_{i}$}
\label{Fig5}
\end{figure}

At the four normal triangles $D_{i}^1$, $\cdots$, $D_{i}^4$ of each tetrahedron $\sigma_{i}$, there are the following equations:

\begin{equation}\label{equ4}
  \begin{cases}
\alpha_{i1}+\alpha_{i2}+\alpha_{i3}=a_{i}^{1}\\
\alpha_{i1}+\alpha_{i5}+\alpha_{i6}=a_{i}^{2}\\
\alpha_{i3}+\alpha_{i4}+\alpha_{i5}=a_{i}^{3}\\
\alpha_{i2}+\alpha_{i4}+\alpha_{i6}=a_{i}^{4}.\\
  \end{cases}
\end{equation}
Notice that here  $a_{i}^{l}=A(t_{i}^{l})-\pi$ for $1\leq i \leq n$ and $1\leq l\leq 4$.

Suppose the number of edges in $\mathcal T$ is $m$. Then at each edge $e_j$ of $\mathcal T$ for $1\leq j\leq m$,  there are also the following equations:

\begin{equation}\label{equ5}
\sum_{i',k'}\alpha_{i'k'}^j=b_j.
\end{equation}
where $\alpha_{i'k'}^j=\alpha(e_{i'k'}^j)$ and $e_{i'k'}^j$ is actually the edge $e_{i'k'}$ in $\sigma_{i'}$ which is identified with the edge $e_j$ in $\mathcal T$.

Notice that here  $b_{j}=\pi-\kappa(e_i)$ or $2\pi-\kappa(e_i)$ for $1\leq j \leq m$ depending on $e_j$ contained in $\partial M$ or not.

Now consider the vector
 \begin{equation}
 (a,b)=(\cdots,a_{i}^{l},\cdots,b_j,\cdots)
 \end{equation}
where $a_{i}^{l}$ ($i=1,\cdots,n$, $k=1,\cdots,4$) comes from (\ref{equ4}) and $b_j$ comes from (\ref{equ5}).

Writing the system of equations (\ref{equ4}) and (\ref{equ5}) in a matrix form as
\begin{equation}\label{equation4}
Bx=(a, b)^{T}.
\end{equation}
Consider the transpose $B^T$ of $B$, which is also the dual of $B$. It has one variable $h_i^l$ for each normal triangle type $t_i^l$ and one variable $z_j$ for each internal edge $e_j$. If we denote the variables corresponding to $B^T $ by $(h, z)=(\cdots,h_{i}^{l},\cdots,\cdots,z_{j},\cdots)$, where $i=1,\cdots,n$, $l=1,\cdots,4$ and $j=1,\cdots,m$, then $B^T(h, z)^T=(\cdots,z_j + h_{i}^{l_1} + h_{i}^{l_2},\cdots)^{T}$.

The following useful formula (i.e., Formula (4.9) in \cite{LT}) belongs to Luo-Tillmann:

\begin{equation}\label{equation2}
\begin{aligned}
\frac{1}{\pi}(h, z)\cdot (a, b)&=\chi^{*}(s)-\chi^{(A,\kappa)}(s)\\
&\;\;\;\;+\frac{1}{2\pi}\sum (z_j + h_{i}^{l_1} + h_{i}^{l_2})(a_{i}^{l_1}+a_{i}^{l_2}-2\pi).
\end{aligned}
\end{equation}
where $s=\sum\limits_{i=1}^{n}\omega_i W_{\sigma_{i}}+\sum\limits_{j=1}^{m}z_j W_{e_{j}}\in \mathcal C(M, \mathcal T)$, and the summation runs over all internal edges in the whole triangulation $\mathcal{T}$.

Now we will prove Theorem \ref{theorem1} in the following:

\begin{proof}
Let $(M, \mathcal T)$ be the $3$-dimensional pseudo-manifold $M$, and let $(A, \kappa)$ be the area-curvature function defined on $\mathcal T$ with $A(t) \leq 0$ for each normal triangle $t$ as in Theorem \ref{theorem1}. Here $A\leq 0$ means that in Equation \ref{equ4}, each $a_{i}^{l}\leq \pi$ for $1\leq i\leq n$ and $1\leq l \leq 4$.

Let $\alpha$ be a semi-angle structure with area-curvature $(A, \kappa)$ which is the condition of Theorem \ref{theorem1}. Then $\alpha_{ik}=\alpha(e_{ik})\geq 0$ for $1\leq i\leq n$ and $1\leq k\leq 6$. Hence

\begin{equation}
(\alpha_{11},\cdots, \alpha_{n6})^T\in \big\{x\in \mathbb R^{6n}\mid Bx=(a,b),x\geq0\big\},
\end{equation}
which means that $\big\{x\in \mathbb R^{6n}\mid Bx=(a,b),x\geq0\big\}\neq \emptyset$. By the second part of Farkas's Lemma \ref{lemma}, for all $(h,z)\in \mathbb R^{4n+m}$ such that $B^T(h,z)^T\leq 0$, $(h,z)\cdot (a,b)\leq 0$ holds.

For getting an angle structure with area-curvature $(A, \kappa)$ from $\alpha$, by the third part of Farkas's Lemma \ref{lemma}, we only need to show that for all $(h,z)\in \mathbb R^{4n+m}$ such that $B^T(h,z)^T\neq 0$ and $B^T(h,z)^T\leq 0$, under the condition that $\chi^\ast(s)<\chi^{(A\leq 0, \kappa)}(s)$ for all $s\in C(M;\mathcal T)$ with all quadrilateral coordinates non-negative and at least one quadrilateral coordinates positive in Theorem \ref{theorem1}, $(h,z)\cdot (a,b)< 0$ holds.

Then there are two cases to be considered.

Case $1$, each coordinate of the vector $a=(a_1^1,\cdots, a_n^4)$ is equal to $\pi$ which means that $A=0$. Luo-Tillmann \cite{LT} has proved that $\chi^\ast(s)<\chi^{(A=0, \kappa)}(s)$ for all $s\in C(M;\mathcal T)$ with all quadrilateral coordinates non-negative and at least one quadrilateral coordinates positive implies that $(M, \mathcal T)$ satisfies an angle structure with area-curvature $(A=0, \kappa)$. Hence the proof is done.

Case $2$, there exists at least one coordinate of the vector $a=(a_1,\cdots, a_{n^4})$ is smaller than $\pi$. We rewrite the vector $a$ as $a=(\cdots, a_{i_1}^{l_1}<\pi,\cdots,\pi,\cdots,\pi)$. Then the corresponding area-curvature function is $(A_1<0,A_2=0, \kappa)$ and the corresponding matrix is denoted by $B'$ in Equation \ref{equation4}.

\begin{lemma}\label{claim}
There exists a vector $(\cdots,{a'}_{i_{1}}^{k_{1}},\cdots,\pi,\cdots,\pi,b_1,\cdots,b_m)=(a,b)$ with ${a'}_{i_{1}}^{k_{1}}<\pi$ such that
$$(h,z)\cdot (a,b)< 0$$
for all $(h, z)$ with ${B'}^{T}(h,z)^T\neq 0$ and ${B'}^{T}(h,z)^T\leq 0$.

\end{lemma}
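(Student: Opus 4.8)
### Proof Plan for Lemma \ref{claim}

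The plan is to leverage the inequality hypothesis of Theorem \ref{theorem1} together with the key Luo-Tillmann identity (\ref{equation2}) to control the sign of $(h,z)\cdot(a,b)$, and to handle the boundary case $A=0$ by a perturbation argument. Throughout we work with $(h,z)$ satisfying ${B'}^T(h,z)^T \neq 0$ and ${B'}^T(h,z)^T \leq 0$; recall that ${B'}^T(h,z)^T$ has entries of the form $z_j + h_i^{l_1} + h_i^{l_2}$, so the constraint ${B'}^T(h,z)^T \leq 0$ reads $z_j + h_i^{l_1} + h_i^{l_2} \leq 0$ for every relevant index, with strict inequality in at least one coordinate. The first step is to observe that such a vector $(h,z)$ determines an element $s = \sum_i \omega_i W_{\sigma_i} + \sum_j z_j W_{e_j} \in \mathcal C(M,\mathcal T)$, where the quadrilateral coordinates $x_q(s)$ are governed by the quantities $-(z_j + h_i^{l_1} + h_i^{l_2}) \geq 0$. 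The sign constraint thus forces all quadrilateral coordinates of $s$ to be non-negative, and the assumption ${B'}^T(h,z)^T \neq 0$ forces at least one quadrilateral coordinate to be positive — placing $s$ precisely in the cone to which the hypothesis $\chi^\ast(s) < \chi^{(A,\kappa)}(s)$ applies.

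Next I would substitute into formula (\ref{equation2}). For the target vector $(a,b)$ with $a = (\cdots, {a'}_{i_1}^{k_1} < \pi, \cdots, \pi, \cdots, \pi)$ we have $a_i^l - \pi \leq 0$ for all $i,l$, so in the correction term $\frac{1}{2\pi}\sum (z_j + h_i^{l_1} + h_i^{l_2})(a_i^{l_1} + a_i^{l_2} - 2\pi)$ each factor $(z_j + h_i^{l_1} + h_i^{l_2}) \leq 0$ multiplies a factor $(a_i^{l_1} + a_i^{l_2} - 2\pi) \leq 0$, making every summand non-negative. Combined with the hypothesis $\chi^\ast(s) - \chi^{(A,\kappa)}(s) < 0$, the identity (\ref{equation2}) gives
\begin{equation*}
\frac{1}{\pi}(h,z)\cdot(a,b) = \big(\chi^\ast(s) - \chi^{(A,\kappa)}(s)\big) + \frac{1}{2\pi}\sum (z_j + h_i^{l_1} + h_i^{l_2})(a_i^{l_1} + a_i^{l_2} - 2\pi).
\end{equation*}
The first bracket is strictly negative and the sum is non-negative, so the naive estimate does not immediately yield $(h,z)\cdot(a,b) < 0$; the non-negative correction term could in principle dominate. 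This is exactly where the freedom in choosing ${a'}_{i_1}^{k_1}$ must be exploited.

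The main obstacle, therefore, is ensuring that the non-negative correction term does not overwhelm the strictly negative main term. My plan is to treat ${a'}_{i_1}^{k_1}$ as a parameter that we may take arbitrarily close to $\pi$ from below. As ${a'}_{i_1}^{k_1} \to \pi^-$, the area-curvature pair $(A,\kappa)$ varies continuously, so $\chi^{(A,\kappa)}(s)$ and hence the main term $\chi^\ast(s) - \chi^{(A,\kappa)}(s)$ vary continuously, while in the limit the correction term's factors $(a_i^{l_1} + a_i^{l_2} - 2\pi)$ all tend to $0$. The strategy is thus a careful limiting or compactness argument: restrict attention to the relevant cone of vectors $(h,z)$ — which, after normalizing, is compact — and use that in Case $1$ (the $A=0$ endpoint) Luo-Tillmann already established $(h,z)\cdot(a,b) < 0$ strictly on this compact set. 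By continuity of the pairing in the parameter ${a'}_{i_1}^{k_1}$, the strict inequality persists for ${a'}_{i_1}^{k_1}$ sufficiently close to $\pi$, yielding the desired vector. The delicate point to verify is that the set of normalized $(h,z)$ under consideration is genuinely compact (closed and bounded after quotienting out the linear relations that fix $s$), so that a uniform strict negative bound at the endpoint transfers to a neighborhood; handling the vectors $(h,z)$ that lie in the kernel of the map to $s$ separately, via the second part of Farkas's Lemma already invoked, completes the argument.
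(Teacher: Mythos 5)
Your setup is correct as far as it goes: constructing $s$ from an admissible $(h,z)$, noting that the hypothesis of Theorem \ref{theorem1} applies to it, and observing that the correction term in the Luo--Tillmann identity has the wrong sign (so the naive estimate fails) all match the paper's framework. But from there your route diverges from the paper's, and the divergence creates a genuine gap. The paper's proof is a \emph{pointwise convexity} argument using two endpoint facts: writing $F(t)(h,z)=(h,z)\cdot\big((1-t)a+ta_0,\,b\big)$ with $a_0=(\pi,\dots,\pi)$, it uses that $F(0)(h,z)\le 0$ for every $(h,z)$ with ${B'}^{T}(h,z)^T\le 0$ (part 2 of Farkas's Lemma \ref{lemma}, since the given semi-angle structure is a non-negative solution of $B'x=(a,b)$), and that $F(1)(h,z)<0$ for every admissible $(h,z)$ (the Case 1 step). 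Since $t\mapsto F(t)(h,z)$ is affine, $F(t_0)(h,z)=(1-t_0)F(0)(h,z)+t_0F(1)(h,z)<0$ for any fixed $t_0\in(0,1)$, separately for each admissible $(h,z)$: no normalization, no uniform bound, no compactness, and the interpolated vector still has its designated coordinates $<\pi$. Your plan never invokes the inequality $F(0)\le 0$ at all -- the semi-angle structure enters your argument only through the construction of $s$ -- and instead tries to buy strictness near the $t=1$ endpoint by uniformity. That unused endpoint inequality is precisely the missing idea.

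The concrete failure point of your compactness route is $\ker{B'}^{T}$, and not only for the reason you flag. Summing the four triangle equations (\ref{equ4}) of every tetrahedron counts each dihedral angle exactly twice, while summing the edge equations (\ref{equ5}) counts it exactly once; hence every vector in $\mathrm{im}(B')$ satisfies the relation $\sum_{i,l}a_i^l=2\sum_j b_j$, i.e.\ is orthogonal to the nonzero kernel vector $(h,z)$ with $h\equiv 1$, $z\equiv -2$. Your perturbed vector, obtained by lowering the coordinate ${a'}_{i_1}^{k_1}$ of $a_0$ while keeping $b$ fixed, violates this relation, so it is \emph{not} orthogonal to that kernel vector; adding a large positive multiple of this kernel vector to any admissible $(h,z)$ leaves ${B'}^{T}(h,z)^T$ unchanged (hence stays admissible) but drives the pairing with your perturbed vector to $+\infty$. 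So the conclusion of Lemma \ref{claim} cannot hold for the vectors your perturbation produces; quotienting out the kernel and citing Farkas part 2 cannot repair this, because kernel vectors do not pair to zero against a vector lying outside $\mathrm{im}(B')$. The only perturbation direction compatible with the kernel is $a-a_0$ itself (which preserves the sum relation), and perturbing along that direction is exactly the paper's interpolation -- at which point the compactness scaffolding is superfluous, by the convexity argument above. (One caveat I do not count against you: your appeal to Case 1 for strict negativity at the endpoint $(a_0,b)$, with $b$ unchanged, is taken verbatim from the paper's own proof, and the same kernel vector shows that this step itself requires $4n\pi=2\sum_j b_j$; a fully rigorous account would have to address this point in the paper's argument as well.)
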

\begin{proof}
Suppose each coordinate of the vector $(\cdots,{a'}_{i_{1}}^{k_{1}},\cdots,\pi,\cdots,\pi)$ is equal to $\pi$, then by Case $1$, the angle structure with area-curvature $(A=0, \kappa)$ exists. Hence by the third part of Farkas's Lemma \ref{lemma},
\begin{equation}\label{definition1}
 (h,z)\cdot (a_0,b)< 0,
\end{equation}
where $(a_0,b)=(\pi,\cdots,\pi,b_1,\cdots,b_m)$.

Now we can construct a function $F(t)$ defined as:  for $t\in [0,1]$ ,
\begin{equation*}
\begin{aligned}
F(t)(h,z)&=(h,z)\cdot (\cdots,{a'}_{i_{1}}^{l_{1}}+t(\pi-{a'}_{i_{1}}^{l_{1}}),\cdots,\pi,\cdots,\pi,b_1,\cdots,b_m)\\
&=\sum({a'}_{i_{1}}^{l_{1}}+t(\pi-{a'}_{i_{1}}^{l_{1}}))h_{i_{1}}^{l_{1}}+\sum \pi h_{i_{2}}^{l_{2}}+\sum_{l=1}^{m}b_j z_{j}.
\end{aligned}
\end{equation*}
Here $(h,z)$ are the variables corresponding to ${B'}^T $, and $h_{i_{2}}^{l_{2}}$ is the variable corresponding to $a_{i}^{l}=\pi$ in the equation (\ref{equ4}).

Thus we have $F(0)\leq 0$, and by Equation (\ref{equation1}), $F(1)< 0$ for all $B^{T}(h,z)^T\neq 0$ and $B^{T}(h,z)^T\leq 0$.

\begin{lemma}
$F(t)(h,z)<0$ for any $t\in (0,1)$, and for any variable $(h,z)$.
\end{lemma}
\begin{proof}
We prove it by contradiction. Assume there is a $t_0\in(0,1)$ and $(h_0,z_0)$ such that
\begin{equation*}
F(t_0)(h_0,z_0)\geq 0.
\end{equation*}
On the one hand $F(0)(h_0,z_0)\leq 0$, derived from the seme-angle structure on ($M, \mathcal T$), and on the other hand, $F(t)(h_0,z_0)$ as a function of $t$ is monotonic because of the linearity of $F(t)(h_0,z_0)$. Then $F(1)(h_0,z_0)\geq 0$, which contradicts the fact that $F(1)(h_0,z_0)<0$. Then we proved the above lemma.

\end{proof}
Finally, if set $a_{i_{1}}^{k_{1}}=\bar{a}_{i_{1}}^{k_{1}}+t_{0}(\pi-\bar{a}_{i_{1}}^{k_{1}})<\pi$, then we have
$$(h,z)\cdot (a,b)=F(t_{0})(h,z)< 0.$$
Hence we get the above Lemma \ref{claim}.
\end{proof}

By the third part of Farkas's Lemma \ref{lemma}, $$\big\{B'x=(a,b)^T,x> 0\big\}\neq \emptyset.$$
Hence $(M,\mathcal T)$ admits an angle structure with area-curvature $(A\leq 0, \kappa)$, and Theorem \ref{theorem1} is proved.

\end{proof}

For the proof of Corollary \ref{corollary2}, it is only need to prove that if $(M, \mathcal T)$ satisfies an angle structure with area-curvature $(A, \kappa)$, then $\chi^\ast(s)<\chi^{(A, \kappa)}(s)$ for all $s\in C(M;\mathcal T)$ with all quadrilateral coordinates non-negative and at least one quadrilateral coordinates positive. The following proof is belongs to Luo-Tillmann~\cite{LT}. However, for the integrity, we rewrite it here.

\begin{proof}
Since $(M, \mathcal T)$ satisfies an angle structure, say $\alpha$, with area-curvature $(A, \kappa)$, the equation $0< \alpha_{ik}=\alpha(e_{ik})< \pi$ holds for $1\leq i\leq n$ and $1\leq k\leq 6$. Hence

\begin{equation}
(\alpha_{11},\cdots, \alpha_{n6})^T\in \big\{x\in \mathbb R^{6n}\mid Bx=(a,b),x>0\big\},
\end{equation}
where the matrix $B$ is from Equation \ref{equation4} and the vector $(a,b)=(\cdots,a_i^l,\cdots,b_1,\cdots,b_m)$.

Then by the second part of Farkas's Lemma \ref{lemma}, $\big\{x\in \mathbb R^{6n}\mid Bx=(a,b),x>0\big\}\neq \emptyset$ implies that $(h,z)\cdot (a,b)<0$ for all $(h,z)\in \mathbb R^{4n+m}$ such that $B^T(h,z)^T\neq0$ and $B^T(h,z)^T\leq0$.

By Equation \ref{equation2}, for any $s\in \mathcal C(M, \mathcal T)$,

\begin{equation}\label{equation5}
\begin{aligned}
(h, z)\cdot (a, b)&=\pi (\chi^{*}(s)-\chi^{(A,\kappa)}(s))\\
&\;\;\;\;+\frac{1}{2}\sum (z_j + h_{i}^{l_1} + h_{i}^{l_2})(a_{i}^{l_1}+a_{i}^{l_2}-2\pi)<0.
\end{aligned}
\end{equation}

Also by $(h, z)\cdot (a, b)<0$, $B^T(h,z)^T<0$ which means that each coordinate $z_j + h_{i}^{l_1} + h_{i}^{l_2}$ of $B^T(h,z)^T$ is negative.  Since $A\leq 0$, each $a_{i}^{l}\leq \pi$ which implies that $a_{i}^{l_1}+a_{i}^{l_2}-2\pi\leq 0$. Hence

\begin{equation}
\begin{aligned}
(h, z)\cdot (a, b)&\geq \pi (\chi^{*}(s)-\chi^{(A,\kappa)}(s)).
\end{aligned}
\end{equation}

Combining with Equation \ref{equation5}, we get that $\chi^{*}(s)<\chi^{(A,\kappa)}(s)$.

Hence the proof of Corollary \ref{corollary2} is done.

\end{proof}

\section{The proof of Theorem \ref{theorem3} and Corollary \ref{corollary4}}\label{4}
Let $(M, \mathcal T)$ be a $3$-dimensional pseudo-manifold and $\{\sigma_1,\cdots,\sigma_n\}$ be all the tetrahedra in $\mathcal T$. Let $(A\leq 0, \kappa)$ be a flat area-curvature function on $\mathcal T$. Here flat means that if $D$ is a normal triangle such that $A(D)=0$, then its three angles  are $(0,0,\pi)$ and for the each  other normal triangle $D'$, $A(D')<0$. By the conditions of Theorem \ref{theorem3}, we suppose that $\alpha$ is a semi-angle structure with area-curvature $(A\leq 0, \kappa)$. Then for each edge $e_{ik}$ of each tetrahedron in $\mathcal T$, the following holds:

$$0\leq \alpha(e_{ik})=\alpha_{ik}\leq \pi,$$
where $1\leq i\leq n$ and $1\leq k\leq 6$.

Through slightly change the function $\alpha$, we can get our proof of Theorem \ref{theorem3} as follows:
\begin{proof}
For each edge $e_j$ of $\mathcal T$, let $m_1$, $n_1$ and $k_1$ respectively be the number of $0$-angles, $\pi$-angles and angles in $(0,\pi)$ around $e_j.$ By that at least one dihedral angles which around $e_j$ are belong to $(0,\pi)$, $k_1\neq 0$. Now let $t$ be a positive real number and $e_{i'k'}^j$ is the edge of $\sigma_{i'}$ which is identified with the edge $e_j$ in $\mathcal T$. Then

\begin{enumerate}
\item if both $m_1= 0$ and $n_1= 0$, then $\alpha_t(e_{i'k'}^j)=\alpha(e_{i'k'}^j)$;

\item otherwise,
\begin{enumerate}
\item when $\alpha(e_{i'k'}^j)=0,$  $\alpha_t(e_{i'k'}^j)=t$;
\item when $\alpha(e_{i'k'}^j)=\pi$,  $\alpha_t(e_{i'k'}^j)=\pi-3t$;
\item when $\alpha(e_{i'k'}^j)\in(0,\pi)$,  $\alpha_t(e_{i'k'}^j)=\alpha(e_{i'k'}^j)-\frac{m_1-3n_1}{k_1}t$.
\end{enumerate}
\end{enumerate}

Now let $A'$ be an area function on normal triangles $\{t_i^l\mid 1\leq i \leq n, 1\leq l\leq 4\}$. Then $A'(t_i^l)=(\alpha_t(e_{i1})+\alpha_t(e_{i2})+\alpha_t(e_{i3}))-\pi$ where $e_{i1},e_{i2},e_{i3}$ are the edges of $\sigma_i$ which intersect $t_i^l$ non-empty.
\begin{claim}
When the real number $t$ is sufficiently small, then the function $\alpha_t$ is an angle structure with area-curvature $(A', \kappa)$ where $A'<0$.
\end{claim}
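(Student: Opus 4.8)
The plan is to verify, for the explicit family $\alpha_t$ just constructed, the three things that make it an angle structure with area-curvature $(A',\kappa)$ and $A'<0$: every dihedral angle lands in the open interval $(0,\pi)$, the curvature at each edge is unchanged (so it still equals $\kappa$), and the new area of every normal triangle is strictly negative. Since $\mathcal T$ has finitely many tetrahedra, there are only finitely many edges $e_{i'k'}$ and normal triangles $t_i^l$, so it suffices to produce a threshold for each and take $t$ below their minimum. First I would note that $\alpha_t$ is well defined: rule $(2)(c)$ is applied only to an angle already lying in $(0,\pi)$ around $e_j$, so such an angle contributes to $k_1$ and forces $k_1\geq 1$; the hypothesis that some dihedral angle around each modified edge lies in $(0,\pi)$ is exactly what guarantees $k_1\neq 0$, so the coefficient $\tfrac{m_1-3n_1}{k_1}$ is legitimate.

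The central step is checking that the total angle sum around each edge $e_j$ is preserved, since this is equivalent to $\kappa$ being unchanged there. For a case $(1)$ edge nothing is moved. For a case $(2)$ edge the $m_1$ zero-angles each rise by $t$, the $n_1$ angles equal to $\pi$ each drop by $3t$, and the $k_1$ interior angles each drop by $\tfrac{m_1-3n_1}{k_1}t$; hence the net change in the angle sum is
$$
m_1 t-3n_1 t-k_1\cdot\frac{m_1-3n_1}{k_1}\,t=(m_1-3n_1)t-(m_1-3n_1)t=0 .
$$
Thus $\kappa(e_j)$ is unaffected for every $e_j$, and this cancellation is precisely what the coefficient $\tfrac{m_1-3n_1}{k_1}$ was designed to produce.

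Next I would dispatch the two remaining conditions by elementary estimates. For the angle bounds: an angle originally in $(0,\pi)$ is displaced by an amount of order $t$ and therefore stays in $(0,\pi)$ once $t$ is small; a former $0$-angle becomes $t\in(0,\pi)$, and a former $\pi$-angle becomes $\pi-3t\in(0,\pi)$. For the areas: if $A(t_i^l)<0$ originally, then $A'(t_i^l)$ is affine in $t$ with value $A(t_i^l)<0$ at $t=0$, so it remains negative for small $t$; if $A(t_i^l)=0$, flatness forces the three angles of $t_i^l$ to be $(0,0,\pi)$, and since each $0$-angle sits on an edge with $m_1\geq 1$ and the $\pi$-angle on an edge with $n_1\geq 1$, all three are modified by rules $(2)(a)$–$(2)(b)$, giving
$$
A'(t_i^l)=t+t+(\pi-3t)-\pi=-t<0 .
$$
Taking $t$ smaller than every threshold above then yields an angle structure with $A'<0$.

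I expect the curvature-preservation step to be the main obstacle, both conceptually and because it is where the hypotheses are pinned down: the perturbation must raise the flat $(0,0,\pi)$ triangles into strictly negative area while leaving every edge-sum exactly fixed, and the only freedom for the fix-up is the redistribution $-\tfrac{m_1-3n_1}{k_1}t$ over the interior angles. Confirming the telescoping identity and, crucially, that $k_1\neq 0$ at each edge where a $0$- or $\pi$-angle is adjusted (so that this redistribution is actually available) is the heart of the argument; everything else reduces to continuity of $A'$ in $t$ and the finiteness of $T^{(1)}$ and of the set of normal triangles.
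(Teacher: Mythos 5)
Your proof is correct and follows the same overall strategy as the paper's: verify, for $t$ below a uniform threshold obtained from finiteness of $\mathcal T$, that all angles land in $(0,\pi)$, that the angle sum around each edge is preserved via the telescoping identity $m_1t-3n_1t-k_1\cdot\frac{m_1-3n_1}{k_1}t=0$ (the paper asserts this preservation without displaying the computation), and that the flat $(0,0,\pi)$ triangles acquire area exactly $-t<0$. The one genuine difference is your treatment of the normal triangles with $A(t_i^l)<0$: you note that $A'(t_i^l)$ is affine in $t$ with value $A(t_i^l)<0$ at $t=0$, so negativity persists for small $t$. The paper instead proves this case by an explicit estimate, bounding the displacement of each angle lying in $(0,\pi)$ by $\frac{3n_{1k}}{k_{1k}}t$ and invoking $\alpha(e_{i1})+\alpha(e_{i2})+\alpha(e_{i3})<\pi$; however, it states this only for triangles \emph{all} of whose angles lie in $(0,\pi)$. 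Your continuity argument is shorter and strictly more complete: it also covers triangles with $A<0$ that carry some angle equal to $0$ or $\pi$ (for instance angles $(0,a,b)$ with $a+b<\pi$), a configuration not excluded by flatness and not addressed by the paper's explicit two-case analysis. Your emphasis on $k_1\neq 0$ as the place where the hypothesis of Theorem \ref{theorem3} enters matches the paper's use of it exactly.
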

\begin{proof}
It can be easily seen that in all cases as above, if $t$ is sufficiently small, then the dihedral angle $\alpha_t(e_{ik})\in (0,\pi)$ where $e_{ik}$ is the edge of $\sigma_i$ and $1\leq k \leq 6$.  And for each edge $e_j$ of $\mathcal T$,

\begin{equation}
\sum_{i',k'}\alpha_t(e_{i'k'}^j)=\sum_{i',k'}\alpha(e_{i'k'}^j)=b_j.
\end{equation}
where $e_{i'k'}^j$ is the edge of $\sigma_{i'}$ which is identified with the edge $e_j$ in $\mathcal T$. Hence for each edge $e_j$ in $\mathcal T$, $\kappa(e_j)=2\pi-b_j$.

For each normal triangle $t_i^l$ in $\sigma_i$ where $1\leq i \leq n$ and $1\leq l\leq 4$, without no loss of generality, denote the edges of $\sigma_i$ which intersect $t_i^l$ non-empty by $e_{i1},e_{i2}, e_{i3}$. Let $m_{1k}(k=1,2,3)$, $n_{1k}(k=1,2,3)$ and $k_{1k}(k=1,2,3)$ respectively be the number of $0$-angles, $\pi$-angles and angles in $(0,\pi)$ around $e_{ik}(k=1,2,3)$. For getting $A'<0$, there are two cases to be considered:
\begin{enumerate}
\item if the angles of $t_i^l$ defined by $\alpha$ are $(0,0,\pi)$, then
\begin{equation*}
A'(t_i^l)=t+t+(\pi-3t)-\pi=-t<0.
\end{equation*}
\item if all the angles of $t_i^l$ defined by $\alpha$ are belong to $(0,\pi)$ and suppose that $\frac{n_{13}}{k_{13}}\geq\frac{n_{11}}{k_{11}}$ and $\frac{n_{13}}{k_{13}}\geq\frac{n_{12}}{k_{13}}$, then

\begin{equation*}
\begin{aligned}
A'(t_i^l)&\leq \alpha(e_{i1})+\frac{3n_{11}}{k_{11}}t+\alpha(e_{i2})+\frac{3n_{12}}{k_{12}}t+\alpha(e_{i3})+\frac{3n_{13}}{k_{12}}t-\pi\\
&\leq \alpha(e_{i1})+\alpha(e_{i2})+\alpha(e_{i3})+9\frac{n_{13}}{k_{13}}t-\pi.
\end{aligned}
\end{equation*}

\end{enumerate}
Since $\alpha(e_{i1})+\alpha(e_{i2})+\alpha(e_{i3})<\pi$, if $t$ is sufficiently small, then $A'(t_i^l)<0$.
\end{proof}

Hence the proof of Theorem \ref{theorem3} is done.
\end{proof}

As an application of Theorem \ref{theorem3}, we show that a hyperbolic $3$-manifold with totally geodesic boundary has a truncated ideal triangulation $\widetilde{\mathcal T}$ such that $\widetilde{\mathcal T}$ admits an angle structure.

In the following, we will prove Corollary \ref{corollary4}.

\begin{proof}
Let $M$ be a hyperbolic 3-manifold with totally geodesic boundary. By Theorem \ref{theoremK}, there is a truncated hyperideal polyhedra decomposition $\mathcal P$ of $M$. We can firstly subdivide each truncated hyperideal polyhedron into truncated hyperideal tetrahedra and secondly insert flat truncated hyperideal tetrahedra between pairs of internal faces of the polyhedra. The resulting is an  ideal truncated triangulation of $M$ which is denoted by $\mathcal T$. The details are as follows which is combinatorially the same as that in Lackenby \cite{Lac2} and Hodgson-Rubinstein-Segerman\,\cite{HRS}.

All the polyhedra and polygons of $\mathcal P$ will respectively mean untruncated hyperideal polyhedra and untruncated hyperideal polygons. We use the projective model $\mathbb H^3\subset \mathbb {RP}^3.$ Recall that a polyhedron is a \emph{pyramid} if its faces consist of an $n$-gon and $n$ triangles which are the cone of the boundary of the $n$-gon to a point $v\in\mathbb{RP}^3.$ The point $v$ and the $n$-gon are respectively called the \emph{tip} and the \emph{base} of the pyramid.  Let $P\subseteq \mathcal P$ be an untruncated hyperideal polyhedron  and  $Q\subset\mathbb {RP}^3$ be the untruncated polyhedron of $\mathcal P$. We arbitrarily pick an vertex $v$ of $Q.$ Then there is a decomposition of $Q$  into pyramids with tips at $v$ and bases the faces of $Q$ disjoint from $v$ by taking cone at $v.$ For the base $D$ of each of the pyramids, we arbitrarily pick an vertex $w$ and decompose $D$ into  triangles by taking cone at $w.$ The decomposition of $D$ extends to a decomposition of the pyramid into tetrahedra. In this way, $Q$ is decomposed into a union of tetrahedra $\{\sigma_i\}.$ In turns, the intersections of $\{\sigma_i\}$ with $P$ give the decomposition of $P$ into truncated hyperideal tetrahedra. By the construction, each face of $P$ is decomposed into truncated hyperideal triangles. However, the decompositions of the same face from two different polyhedra adjacent to it may not always match which is called \emph{pillows}. See Figure  ~\ref{Fig3} for an example.

\begin{figure}[htbp]
\centering
\includegraphics[scale=0.75]{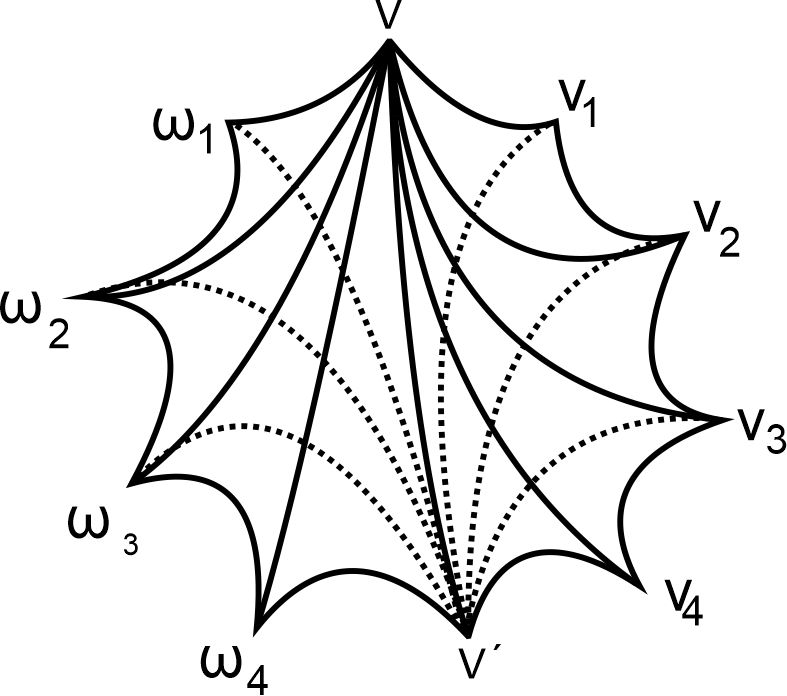}
\caption{Pillow}
\label{Fig3}
\end{figure}

For all the pillows, we insert enough flat truncated hyperideal tetrahedra into them. Then the resulting is an ideal truncated triangulation  $\mathcal T$ of $M$.

\begin{claim}\label{claim1}
For a pair $(A\leq 0,\kappa=0)$, there is a flat semi-angle structure $\alpha$ with area-curvature $(A, \kappa)$ such that $\mathcal T$ admits $\alpha$.
\end{claim}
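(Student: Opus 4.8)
The plan is to build $\alpha$ piecewise, using the genuine hyperbolic geometry on the honest truncated hyperideal tetrahedra and degenerate angles on the inserted pillow tetrahedra, and then to verify the two defining conditions of a semi-angle structure with area-curvature $(A,\kappa)$ --- the vertex (area) condition and the edge (curvature) condition --- separately on the two kinds of tetrahedra. Concretely, each truncated hyperideal tetrahedron $\sigma_i$ arising from the subdivision of the Kojima decomposition $\mathcal P$ is a genuine hyperbolic polyhedron, and I set $\alpha(e_{ik})$ equal to its hyperbolic dihedral angle along the internal edge $e_{ik}$, which lies in $(0,\pi)$. On each inserted flat tetrahedron I assign the degenerate angles obtained by flattening a planar quadrilateral: the two mutually opposite diagonal edges receive angle $\pi$, and the remaining four side edges receive angle $0$. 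All assigned values lie in $[0,\pi]$, so $\alpha$ is a legitimate candidate for a semi-angle structure.

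For the area condition, recall that at a hyperideal vertex $v$ of $\sigma_i$ the truncation plane $H_v$ is perpendicular to the three faces meeting $v$, so the link $H_v\cap\sigma_i$ is a genuine hyperbolic triangle whose three angles are exactly the dihedral angles $\alpha_1,\alpha_2,\alpha_3$ along the three edges at $v$. Hence for the corresponding normal triangle $t$ one gets $A(t)=\alpha_1+\alpha_2+\alpha_3-\pi=-\operatorname{Area}(H_v\cap\sigma_i)<0$, so every normal triangle of a genuine tetrahedron has strictly negative combinatorial area. For a flat tetrahedron the assignment above makes each of its four normal triangles carry angles $(0,0,\pi)$, whence $A(t)=0+0+\pi-\pi=0$. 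Thus the pair $(A,\kappa)$ determined by $\alpha$ satisfies $A\le 0$ with equality occurring precisely on $(0,0,\pi)$-triangles; that is, $A$ is flat.

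It remains to check the curvature condition $\kappa(e)=0$ at every edge, i.e.\ that the dihedral angles sum to $2\pi$ around each internal edge and to $\pi$ around each external edge, and this is where the bulk of the work lies. For an edge disjoint from the pillows the identification of tetrahedra is exactly the geometric gluing of $M$, so smoothness of the hyperbolic structure in the interior (respectively the totally geodesic boundary) forces the sum to be $2\pi$ (respectively $\pi$). For the edges introduced by, or adjacent to, a pillow I would argue combinatorially, viewing the pillow as a zero-thickness layered region interpolating between the two triangulations of the common geodesic polygon $D$ induced by the two polyhedra sharing it: organize the inserted flat tetrahedra as a sequence of diagonal exchanges, so that each diagonal interior to the pillow is flanked by exactly the two flat tetrahedra on its two sides, each contributing $\pi$ and hence summing to $2\pi$, while every side edge receives $0$ from each incident flat tetrahedron and therefore retains the angle sum it already had in the genuine decomposition.

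The main obstacle is exactly this last bookkeeping across the pillows. One must exhibit, for each pillow, a compatible taut (all angles $0$ or $\pi$) layering whose two boundary triangulations are prescribed by the adjacent polyhedra, check that no side edge ever acquires a nonzero flat contribution, and verify that where a pillow abuts a genuine polyhedron the flat $\pi$-contribution along a boundary diagonal combines with the genuine contributions to give exactly $2\pi$ (or $\pi$). Establishing that such a layering always exists and meshes with the genuine hyperbolic angles without disturbing any edge sum is the heart of the argument; once it is in place, $\alpha$ meets both conditions and is the desired flat semi-angle structure with area-curvature $(A\le 0,\kappa=0)$.
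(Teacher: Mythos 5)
Your construction and your verification of the area condition coincide with the paper's: geometric dihedral angles on the genuine truncated hyperideal tetrahedra, the degenerate assignment ($\pi$ on the two diagonals, $0$ on the four sides) on each inserted flat tetrahedron, and the observation that the truncation triangle $H_v\cap\sigma_i$ is a hyperbolic triangle whose angle sum is less than $\pi$, so that $A<0$ on genuine tetrahedra while $A=0$ occurs exactly on the $(0,0,\pi)$-triangles. Up to that point you are reproducing the paper's argument (in fact with a cleaner justification of $A<0$ than the paper gives).

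The gap is that you stop at the curvature condition: you leave as an unresolved ``obstacle'' the existence of a compatible taut layering of each pillow and the check that it meshes with the genuine angles. Two remarks close this. First, existence of the layering is not this claim's burden: the claim is stated for the triangulation $\mathcal T$ that has already been constructed in the proof of Corollary \ref{corollary4} (following Lackenby and Hodgson--Rubinstein--Segerman), so the flat tetrahedra realizing a flip sequence inside each pillow are already in place, and you only need to assign angles on this fixed $\mathcal T$. Second, the meshing reduces to one geometric fact you never state: every edge touching a pillow lies in the totally geodesic plane containing the common face $D$, and the genuine tetrahedra of an adjacent polyhedron containing such an edge fill exactly one side of that plane near the edge, hence contribute total dihedral angle $\pi$ around it. Combined with your (correct) accounting of the flat contributions --- $\pi$ precisely when the edge is the diagonal flipped by that flat tetrahedron, $0$ otherwise --- every case sums to $2\pi$: an interior diagonal of a pillow gets $\pi+\pi$ from its two flanking flat tetrahedra, a boundary diagonal gets $\pi$ from the genuine side plus $\pi$ from the flat flip tetrahedron, and a side edge of $D$ gets $2\pi$ from the genuine polyhedra (smoothness of the hyperbolic structure) plus zeros. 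This is exactly what the paper compresses into the remark that, since all tetrahedra --- including the degenerate ones --- are isometrically realized inside the hyperbolic manifold $M$, the angle sum around every internal edge is forced to be $2\pi$, i.e.\ $\kappa=0$.
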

\begin{proof}
For each internal edge $e_{ik}$ in $\sigma_i$ of $\mathcal T$, there is a function $\alpha$ such that $\alpha(e_{ik})$ is the dihedral angle at $e_{ik}$ in $\sigma_i$. By the construction of $\mathcal T$, $\alpha(e_{ik})<\pi$ if $\sigma_i$ is a hyperideal tetrahedron and $\alpha(e_{ik})=0$ or $\pi$ if $\sigma_i$ is flat. For each normal triangle $D_i^l$, there is a function $A$ such that
\begin{equation}
A(D_i^l)=\alpha(e_{i})+\alpha(e_{i2})+\alpha(e_{i3})-\pi,
 \end{equation}
where $e_{i1},e_{i2},e_{i3}$ are the internal edges of $\sigma_i$ which intersect $D_i^l$ non-empty.

If $D_i^l$ is a normal triangle in a flat truncated tetrahedron, then $A(D_i^l)=0$. Notice that in this case, the angles of $D_i^l$ assigned by $\alpha$ are $(0,0,\pi)$.

If $D_i^l$ is a normal triangle in a truncated tetrahedron(not flat), then $A(D_i^l)<0$ since $\partial M$ is a totally geodesic boundary in $M$.

For  each internal edge $e_{j}$ of $\mathcal T$, there is function $\kappa$ such that
\begin{equation}
\kappa(e_j)=2\pi-\sum_{i',k'}\alpha(e_{i'k'}^j),
 \end{equation}
where $e_{i'k'}^j$ is the internal edge of $\sigma_{i'}$ which is identified with the internal edge $e_j$ in $\mathcal T$. By that $M$ is a hyperbolic $3$-manifold, $\kappa(e_j)=0$.

Hence $\alpha$ is a flat semi-angle structure with area-curvature $(A, \kappa)$.
\end{proof}

Since no new edges were introduced in the construction of $\mathcal T$, each edge in $\mathcal T$ is adjacent to at least one hyperideal tetrahedron. Which means that for each internal edge $e_j$, there is at least one dihedral angles which around $e_j$ are belong to $(0,\pi)$. Hence by Theorem \ref{theorem3}, the proof of Corollary \ref{corollary4} is done.

\end{proof}

\section{The proof of Theorem \ref{theorem5}}\label{5}

Let $M$ be a compact pseudo  $3$-manifold with non-$S^2$ boundary and $\mathcal T$ be an ideal triangulation of the interior of $M$. Suppose $\alpha$ is an angle structure with area-curvature $(A\leq0, \kappa\leq0)$ on $(M, \mathcal T)$ and $S$ is an essential sphere, essential tori or Klein bottles. Then the Euler characteristic of $S$ is non-negative. By Haken \cite{Haken} and Freedman \cite{Freedman}, if $S$ is an embedded surface in $M$, then $S$ can be isotopied into an embedded surface in $(M, \mathcal T)$. $S$ is decomposed into some normal triangles or some normal quadrilaterals. Hence $S$ corresponds an element $s$ in $\mathcal C(M,\mathcal T)$ which is the solving space of compatibility equations. If we write $s$ as $s=\{x_1,\cdots,x_{3n},y_1,\cdots,y_{4n}\}$, then $x_i\geq 0$ and $y_{i'}\geq 0$ for $1\leq i\leq 3n, 1\leq i'\leq 4n$.

For each normal triangle $t$ and edge $e$ of $S$, by $(A\leq0, \kappa\leq0)$, we have $A(t)\leq0$ and $\kappa(e)\leq0$.

Recall that if $s=\sum_{i=1}^n\omega_iW_{\sigma_i}+\sum_{j=1}^m 2z_j W_{e_j}$, then

\begin{equation*}
\chi^{(A,\kappa)}(s)=\frac{1}{2\pi}( \sum_{t\in\bigtriangleup}y_{t}(s)A(t)+\sum_{j=1}^n 2z_j \kappa(e_j)),
\end{equation*}
where $2z_j$ is exactly the intersection number of $S$ with the edge $e_j$.

Hence $\chi^{(A,\kappa)}(s)\leq 0$.

By the following equation
\begin{equation*}
\chi^{(A,\kappa)}(s)=\chi^{*}(s)-\frac{1}{2\pi}\sum_{q\in \square}A(q)x_{q}(s),
\end{equation*}
and $\chi^{*}(s)=\chi(S)\geq0$, we get that

\begin{equation}\label{equ3}
\frac{1}{2\pi}\sum_{q\in \square}A(q)x_{q}(s)\geq 0.
\end{equation}

For any normal quadrilateral $q$, without loss of generality, let $e_{i2},e_{i3},e_{i5},e_{i6}$ be the four edges of $\sigma_i$ which intersect $q$ non-empty, see Figure~\ref{Fig3}.  Then

\begin{figure}[htbp]
\centering
\includegraphics[scale=0.30]{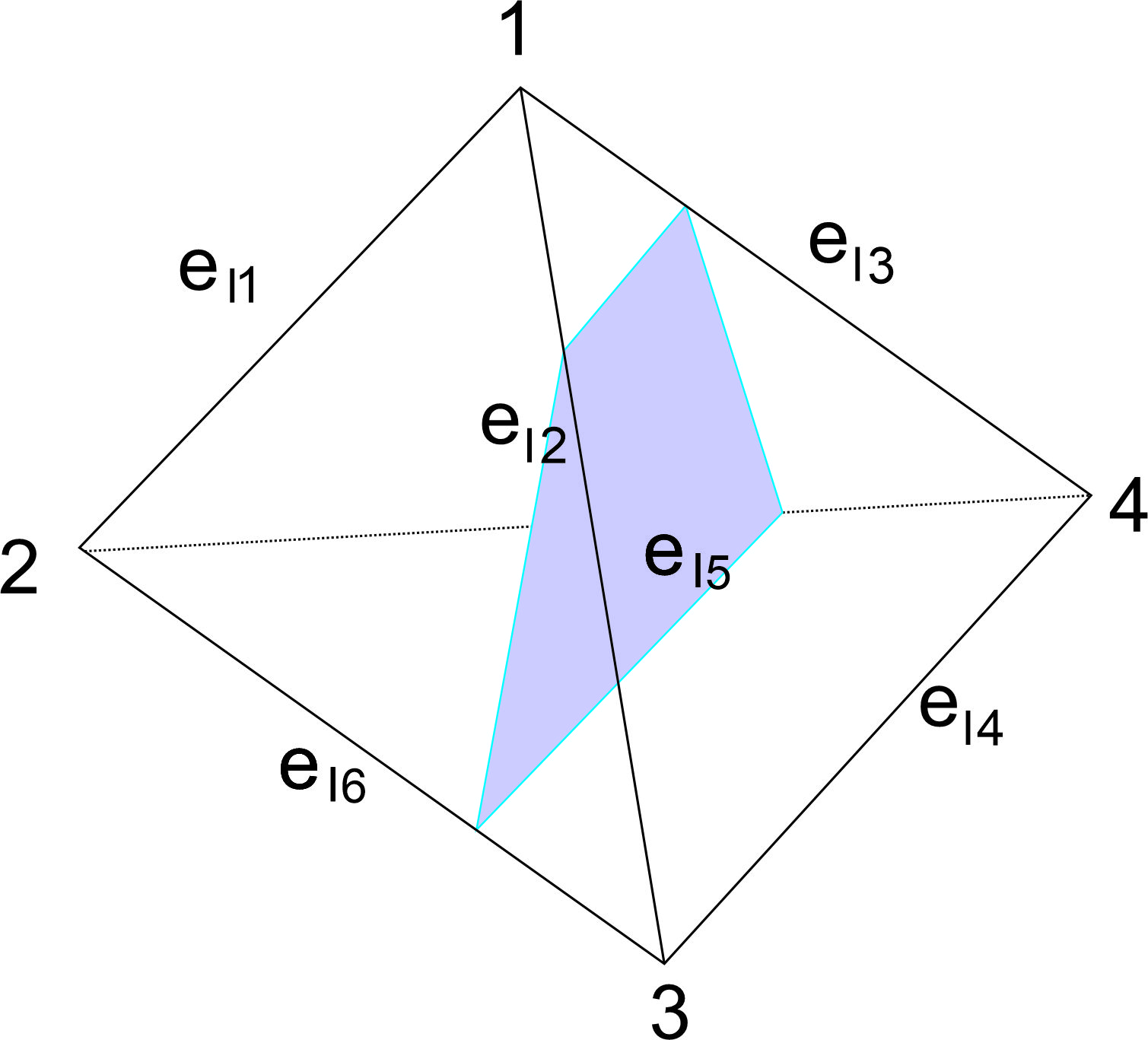}
\caption{Quadrilateral}
\label{Fig3}
\end{figure}

\begin{equation}
\begin{aligned}
A(q)&=\alpha(e_{i2})+\alpha(e_{i3})+\alpha(e_{i5})+\alpha(e_{i6})-2\pi \\
&<\alpha(e_{i2})+\alpha(e_{i3})+\alpha(e_{i1})+\alpha(e_{i5})+\alpha(e_{i6})+\alpha(e_{i1})-2\pi\\
&=(\alpha(e_{i2})+\alpha(e_{i3})+\alpha(e_{i1})-\pi)+(\alpha(e_{i5})+\alpha(e_{i6})+\alpha(e_{i1})-\pi)\\
&\leq 0.
\end{aligned}
\end{equation}
where the inequality holds because of that $\alpha(e_i^k)\in (0,\pi)$ for $1\leq i\leq n, 1\leq k\leq6$ and $A\leq 0$.

The above inequality combining with   $x_{q}(s)=x_i\geq 0$  is contradicts with the Equation \ref{equ3}. Hence $M$ contains no essential sphere, essential tori or Klein bottles.

Now suppose $\widetilde{\mathcal T}$ is an ideal truncated triangulation of $M$ which is deduced from $\mathcal T$ by truncating all the ideal vertices in $\mathcal T$. Then $\alpha$ is also an angle structure with area-curvature $(A\leq0, \kappa\leq0)$ on $(M, \mathcal T)$. Let $S$ be an essential disk or annulus in $M$. By Lackenby~\cite{Lackenby-1}, $S$ can be isotopied into an \emph{admissible surface} in $(M, \widetilde{\mathcal T})$. Which means that the intersections of $S$ and each truncated tetrahedron of $\widetilde{\mathcal T}$ are consistent of the following \emph{admissible disks} which intersect the external
faces of each truncated tetrahedron at most three times, see Figure  ~\ref{Fig4}.

\begin{figure}[htbp]
\centering
\includegraphics[scale=0.75]{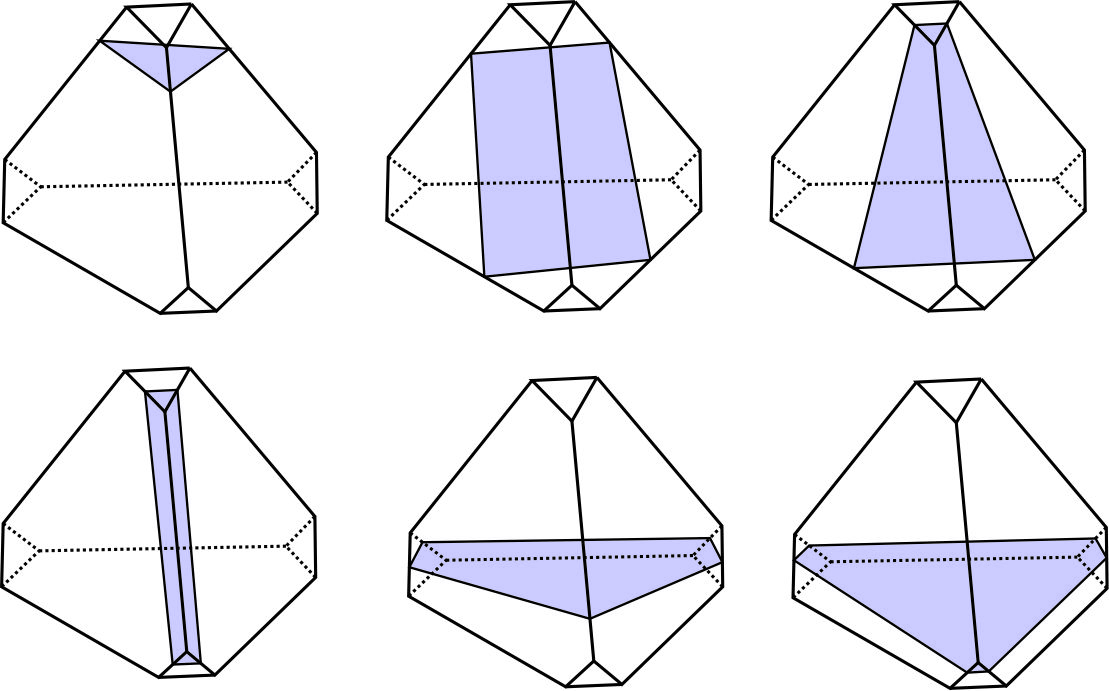}
\caption{Admissible surfaces}
\label{Fig4}
\end{figure}

By Lackenby~\cite{Lac2} and the same arguments as the case of ideal triangulation, we have that the Euler characteristic of $S$ is negative which is contradicts with the fact that $S$ is an essential disk or annulus. Hence,  $M$ contains no essential disk or annulus.

By Theorem $7.2$ of Jaco-Rubinstein~\cite{JR} which said that if  $M$ is a compact, irreducible, $\partial$-irreducible, anannular $3$-manifold, then any ideal triangulation of the interior of $M$ can be modified to a $0$-efficient ideal triangulation of the interior of $M$. This modification can be achieved by crushing $\mathcal T$ along certain isotopy copies of all the vertex-linking surfaces. Hence the proof of Theorem \ref{theorem5} is done.




\noindent
\noindent

\noindent Huabin Ge, hbge@ruc.edu.cn\\[2pt]
\emph{School of Mathematics, Renmin University of China, Beijing 100872, P. R. China}\\[2pt]

\noindent Longsong Jia, jialongsong@stu.pku.edu.cn\\[2pt]
\emph{School of Mathematical Sciences, Peking University, Beijing, 100871, P. R. China}\\[2pt]

\noindent Faze Zhang, zhangfz201@nenu.edu.cn\\[2pt]
\emph{School of Mathematics and Statistics, Northeast Normal University, Changchun, Jilin, 130024, P. R. China} \\[2pt]


\begin{thebibliography}{99}
\setlength{\itemsep}{-1pt} \small

\bibitem{BB} X. Bao, F. Bonahon, \emph{Hyperideal polyhedra in hyperbolic 3-space}, Bull. Soc. Math. France 130 (2002), no. 3, 457-491.



\bibitem{Bing} B. H. Bing, \emph{An alternative proof that $3$-manifolds can be triangulated}, Ann. Math. 69 (1959), 37-65.

\bibitem{Freedman} M. Freedman, J. Hass, P. Scott, \emph{Least area incompressible surfaces in $3$-manifolds}, Invent. Math. 71 (1983) 609-642.

\bibitem{Futer2011} D. Futer, F. Gu\'{e}ritaud, \emph{From angled triangulations to hyperbolic structures, Interactions between hyperbolic geometry, quantum topology and number theory}, Contemp. Math., vol. 541, Amer. Math. Soc., Providence, RI, 2011, pp. 159-182.
  
\bibitem{Garouf} S. Garoufalidis, C. D. Hodgson, J. H. Rubinstein, H. Segerman, \emph{$1$-efficient triangulations and the index of a cusped hyperbolic 3-manifold}, Geom. Topol. 19 (5): 2619-2689, 2015.

\bibitem{GJZ} H. Ge, L. Jia, F. Zhang, \emph{Angle structure on general hyperbolic $3$-manifolds}, preprint, arXiv:2408.14003 [math.GT], 2024.


 \bibitem{Haken} W. Haken, \emph{Some results on surfaces in 3-manifolds}, from: ``Studies in Modern Topology", Math. Assoc. Amer. (1968), 39-98.


\bibitem{HRS} C. D. Hodgson, J. H. Rubinstein, H. Segerman, \emph{Triangulations of hyperbolic 3-manifolds admitting strict angle structures}, J. Topol. 5  (2012),  no. 4, 887-908.

\bibitem{JR} W. Jaco, J. H. Rubinstein, \emph{$0$-Efficient triangulations of 3-manifolds}, J. Differential Geom. 65 (1) (2003), 61-168.

\bibitem{KangRubin}
E. Kang, J. H. Rubinstein,  \emph{Ideal triangulations of 3-manifolds I; spun normal surface theory}, Geometry and Topology Monographs, Vol. 7 , Proceedings of the Casson Fest,  (2004), 235-265.

\bibitem{KR} E. Kang, J. H. Rubinstein, \emph{Ideal triangulations of $3$-manifolds. II. Taut and angle structures}, Algebr. Geom. Topol.5 (2005), 1505-1533.


\bibitem{Lackenby-1} M. Lackenby, \emph{Word hyperbolic Dehn surgery}, Invent. Math. 140 (2000), 243-282.

\bibitem{Lac} M. Lackenby, \emph{Taut ideal triangulations of 3-manifolds}, Geom. Topol. 4 (2000), 369-395.

\bibitem{Lac2} M. Lackenby, \emph{An algorithm to determine the Heegaard genus of simple 3-manifolds with non-empty boundary}, Alge. Geom. Top. 8 (2008), 911-934.

\bibitem{Luo-3-flow} F. Luo, \emph{A combinatorial curvature flow for compact 3-manifolds with boundary}, Electron. Res. Announc. Amer. Math. Soc. 11 (2005), 12-20.


\bibitem{LT} F. Luo, S. Tillmann, {\em Angle structures and normal surfaces}, Trans. Amer. Math. Soc. 360 (2008), no. 6, 2849-2866.

\bibitem{Luo-Y} F. Luo, T. Yang, \emph{Volume and rigidity of hyperbolic polyhedral 3-manifolds}, J. Topol. 11 (2018), 1-29.

\bibitem{Kojima} S. Kojima, \emph{Polyhedral decomposition of hyperbolic 3-manifolds with totally geodesic boundary}, Aspects of low-dimensional manifolds, 93-112, Adv. Stud. Pure Math., 20 (1992), 93-112.


\bibitem{Moise} E. E. Moise, \emph{Affine structures in 3-Manifolds. V. The triangulation theorem and hauptvermutung}, Ann. Math. 56 (1952), 96-114.


\bibitem{Rivin} I. Rivin, {\em Euclidean structures on simplicial surfaces and hyperbolic volume}, Ann. of Math. 139 (1994), 553-580.

\bibitem{Schlenker} J. M. Schlenker, \emph{Hyperideal polyhedra in hyperbolic manifolds}, preprint (2002), arXiv math/0212355.

\bibitem{Tillmann} S. Tillmann, \emph{Normal surfaces in topologically finite 3-manifolds}, Enseign. Math. 54 (3-4) (2008), 329-380.

\bibitem{Ziegler} G. M. Ziegler, \emph{Lectures on polytopes}, Springer-Verlag, New York, 1995.\\[5pt]
\end{thebibliography}
\end{document}